\documentclass[11pt]{amsart}
\usepackage{amscd,amssymb}
\usepackage{amsfonts}
\usepackage{amsmath}
\usepackage{mathrsfs}
\usepackage{graphicx} 
\usepackage{color}
\usepackage{hyperref} 

\textwidth15.8 cm
\oddsidemargin.4cm
\evensidemargin.4cm

\setcounter{tocdepth}{1}

\newtheorem{theorem}{Theorem}[section]
\newtheorem{corollary}[theorem]{Corollary}
 
\newtheorem{lemma}[theorem]{Lemma} 
 
\newtheorem{definition}[theorem]{Definition} 

\newtheorem{example}[theorem]{Example}

\newcommand\qbin[2]{\left[  \begin{array}{c}  #1 \\ #2  \end{array}  \right]}

\title[]{Congruence for lattice path models with filter restrictions and long steps}

\author{Dmitry Solovyev}
\address{D.S.: Euler International Mathematical Institute \\ Yau Mathematical Sciences Center, Tsinghua University, Beijing 100084, China \\ \& Department of Physics, Saint Petersburg State University,  Ulyanovkaya str.1, Saint Peterburg, Russia}
\email{dimsol42@gmail.com}
\begin{document}
	
\maketitle
	
\begin{abstract}
We derive a path counting formula for a two-dimensional lattice path model with filter restrictions in the presence of long steps, source and target points of which are situated near the filters. This solves the problem of finding an explicit formula for multiplicities of modules in tensor product decomposition of $T(1)^{\otimes N}$ for $U_q(sl_2)$ with divided powers, where $q$ is a root of unity. Combinatorial treatment of this problem calls for the definition of congruence of regions in lattice path models, properties of which are explored in this paper.
\end{abstract}
	
\tableofcontents
	
\section*{Introduction}
{Representation} theory of Kac--Moody algebras to this day serves as inspiration for numerous combinatorial problems, solutions to which give rise to interesting combinatorial structures. Examples of this can be met in (\cite{L95,M98,K90}) and many other well-known works. The problem of tensor power decomposition, in turn, can be considered from the combinatorial perspective as a problem of counting lattice paths in Weyl chambers (\cite{GM93,G02,TZ04,PR}). In this paper, we count paths on the Bratteli diagram \cite{B72}, reproducing the decomposition of tensor powers of the fundamental module of the quantum group $U_q(sl_2)$ with divided powers, where $q$ is a root of unity (\cite{QG,CP,A92,LPRS}), into indecomposable modules. Combinatorial treatment of this problem gives rise to some interesting structures on lattice path models, such as filter restrictions, first introduced in \cite{PS}, and long steps, which are introduced in the present paper.

In \cite{PS}, the considered lattice path model was motivated by the problem of finding explicit formulas for multiplicities of indecomposable modules in the decomposition of tensor power of fundamental module $T(1)$ of the small quantum group $u_q(sl_2)$ \cite{SQG}. We call this model the auxiliary lattice path model \cite{LPRS}. It consists of the left wall restriction at $x=0$ and filter restrictions located periodically at $x=nl-1$ for $n\in\mathbb{N}$. For $n=1$, the filter restriction is of type $1$, and the rest of the values of $n$ filter restrictions are of type $2$. Applying periodicity conditions $(M+2l,N)=(M,N)$, $M, N\geq l-1$ to the Bratteli diagram of this model allows one to obtain another lattice path model, recursion for weighted numbers of paths that coincide with recursion for multiplicities of indecomposable $u_q(sl_2)$-modules in the decomposition of $T(1)^{\otimes N}$. Counting weighted numbers of paths descending from $(0,0)$ to $(M,N)$ on this folded Bratteli diagram allows one to obtain desired formula for multiplicity, where $M$ stands for the highest weight of a module, the multiplicity of which is in question, and $N$ stands for the tensor power of $T(1)$. This has been performed in \cite{LPRS}.

We found that the auxiliary lattice path model can be modified in a different way, giving results for representation theory of $U_q(sl_2)$, the quantized universal enveloping algebra of $sl_2$ with divided powers, when $q$ is a root of unity \cite{S}. Instead of applying periodicity conditions to the auxiliary lattice path model, as in the case of $u_q(sl_2)$, for $U_q(sl_2)$ we consider all filters to be of the $1$st type and also allow additional steps from $x=nl-2$ to $x=(n-2)l-1$, where $n\geq 3$. Counting weighted numbers of paths descending from $(0,0)$ to $(M,N)$ on the Bratteli diagram of the lattice path model obtained by this modification gives a formula for the multiplicity of $T(M)$ in the decomposition of $T(1)^{\otimes N}$.

The main goal of this paper is to give a more in-depth combinatorial treatment of the auxiliary lattice path model in the presence of long steps and obtain explicit formulas for weighted numbers of paths, descending from $(0,0)$ to $(M,N)$. We explore combinatorial properties of long steps, as well as define boundaries and congruence of regions in lattice path models. Latter is found to be useful for deriving formulas for weighted numbers of paths. For any considered region, weighted numbers of paths at boundary points uniquely define such for the rest of the region by means of recursion. So, for congruent regions in different lattice path models, regions where, roughly speaking, recursion is similar, it is sufficient to prove identities only for boundary points of such regions.

This paper is organized as follows. In Section \ref{NOTATIONS}, we introduce the necessary notation. 
In Section \ref{ALPM}, we give background on the auxiliary lattice path model. In Section \ref{congandbound}, we introduce the notion of regions in lattice path models, boundary points and congruence of regions. In Section \ref{MALPM}, we explore combinatorial properties of long steps in periodically filtered lattice path models and consider the auxiliary lattice path model in the presence of long steps. We do so by means of boundary points and congruence of regions. In Section \ref{RelU}, we modify the auxiliary lattice path model and argue that the recursion for the weighted number of paths in such modified model coincides with the recursion for multiplicities of modules in tensor product decomposition of $T(1)^{\otimes N}$ for $U_q(sl_2)$ with divided powers, where $q$ is a root of unity. In Section \ref{PATHS}, we prove formulas for the weighted numbers of descending paths, relevant to this modified model. In Section \ref{CONCLUSIONS}, we conclude this paper with observations for possible future directions or research.
\subsection*{Acknowledgements} We are grateful to Olga Postnova, Nicolai Reshetikhin, Pavel Nikitin, Fedor Petrov for fruitful discussions. This research was supported by the grant "Agreement on the provision of a grant in the form of subsidies from the federal budget for implementation of state support for the creation and development of world-class scientific centers, including international world-class centers and scientific centers, carrying out research and development on the priorities of scientific technological development No.~075-15-2019-1619" dated 8th November 2019. The author expresses gratitude for the support by the Xing Hua Scholarship program of Tsinghua University.\par

\section{Notations}\label{NOTATIONS}
In this paper, we use the notation following \cite{Kratt}. For our purposes of counting multiplicities in tensor power decomposition of $U_q(sl_2)$-module $T(1)$, throughout this paper, we consider the lattice
$$\mathcal{L}=\{(n,m)| n+m=0\; \text{mod} \;2\}\subset \mathbb{Z}^2,$$
and the set of steps $\mathbb{S}=\mathbb{S}_L\cup \mathbb{S}_R$, where
$$\mathbb{S}_R=\{(x,y)\to(x+1,y+1)\},\; \mathbb{S}_L=\{(x,y)\to(x-1,y+1)\}.$$
{A}
\textit{{lattice path}
} $\mathcal{P}$  in $\mathcal{L}$  is a sequence $\mathcal{P}=(P_0,P_1,\dots, P_m)$ of points $P_i=(x_i,y_i)$ in $\mathcal{L}$ with starting point $P_0$ and the endpoint $P_m$. The pairs $P_0\to P_1, P_1\to P_2\dots P_{m-1}\to P_m$ are called steps of $\mathcal{P}$.

Given starting point $A$ and endpoint $B$, a set $\mathbb{S}$ of steps and a set of restrictions $\mathcal{C}$ \mbox{we write} 
$$L(A\to B;\mathbb{S}\;|\;\mathcal{C})$$
for the set of all lattice paths from $A$ to $B$ that have steps from $\mathbb{S}$ and obey the restrictions from $\mathcal{C}$. We denote the number of paths in this set as
$$|L(A\to B;\mathbb{S}\;|\;\mathcal{C})|.$$

The set of restrictions $\mathcal{C}$ in lattice path models considered throughout this paper mostly contain wall restrictions and filter restrictions. Left(right) wall restrictions forbid steps in the left(right) direction, reflecting descending paths and preventing them from crossing the 'wall'. Filter restrictions forbid steps in certain directions and provide other steps with non-uniform weights, so paths can cross the 'filter' in one direction, but cannot cross it in the opposite direction. A rigorous definition of these restrictions is given in subsequent sections.

To each step from $(x,y)$ to $(\tilde{x},\tilde{y})$  we assign the weight  function $\omega:\mathbb{S}\longrightarrow \mathbb{R}_{>0}$ and use notation  $(x,y)\xrightarrow[]{\omega}(\tilde{x},\tilde{y})$  to denote that  the step  from $(x,y)$ to $(\tilde{x},\tilde{y})$ has the weight $\omega$. By default, all unrestricted steps from $\mathbb{S}$ will have weight $1$  and is denoted by an arrow with no number at the top. The \textit{weight} of a path $\mathcal{P}$ is defined as the product
$$\omega(\mathcal{P})=\prod_{i=0}^{m-1}\omega(P_i\to P_{i+1}).$$

For the set $L(A\to B;\mathbb{S}\;|\;\mathcal{C})$ we define the \textit{weighted number of paths} as
$$Z(L(A\to B;\mathbb{S}\;|\;\mathcal{C}))=\sum_{\mathcal{P}}\omega(\mathcal{P}),$$
where the sum is taken over all paths $\mathcal{P}\in L(A\to B;\mathbb{S}\;|\;\mathcal{C})$.

\section{The Auxiliary Lattice Path Model}\label{ALPM}
In this section, we briefly revise notions and results obtained in \cite{PS}, relevant for future considerations. It is convenient for us to omit mentioning $\mathbb{S}$ in $L(A\to B;\mathbb{S}\;|\;\mathcal{C})$. All paths considered below involve steps from set $\mathbb{S}$ unless stated otherwise.

\subsection{Unrestricted Paths}
Let $L(A\to B)$ be the set of unrestricted paths from $A$ to $B$ on lattice $\mathcal{L}$ with the steps $\mathbb{S}$. An example of such a path is given in Figure \ref{up}.
\begin{figure}[h!]
	{\includegraphics[width=250pt]{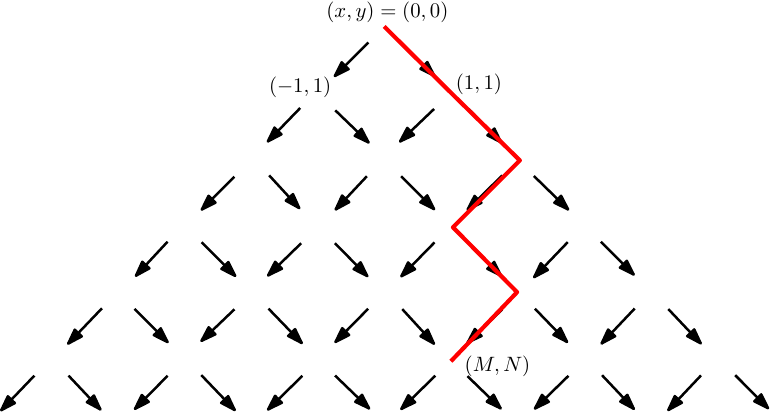}}
	\caption{Example of an unrestricted path in $L((0,0)\to (M,N))$ for lattice $\mathcal{L}$ and set of steps $\mathbb{S}$.}
	\label{up}
\end{figure}
\begin{lemma}
	For a set of unrestricted paths with steps $\mathbb{S}$ we have
	\begin{equation}
	|L((0,0)\to (M,N))|={N\choose\frac{N-M}{2}}.
	\end{equation}
\end{lemma}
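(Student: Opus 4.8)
The plan is to set up a bijection between lattice paths in $L((0,0)\to(M,N))$ and subsets of a fixed $N$-element set, thereby reducing the count to a binomial coefficient. A path $\mathcal{P}=(P_0,\dots,P_N)$ from $(0,0)$ to $(M,N)$ consists of exactly $N$ steps, each of which is either a right step from $\mathbb{S}_R$ (increasing the $x$-coordinate by $1$) or a left step from $\mathbb{S}_L$ (decreasing the $x$-coordinate by $1$); in either case the $y$-coordinate increases by $1$, so after $N$ steps the second coordinate is automatically $N$, consistent with the endpoint. Hence a path is completely determined by the choice of which of its $N$ steps are left steps (equivalently, right steps), and conversely any such choice yields a legitimate path in $\mathcal{L}$ since there are no wall or filter restrictions to violate.

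Next I would translate the endpoint condition into a constraint on the number of each type of step. If the path uses $r$ right steps and $\ell$ left steps, then $r+\ell=N$ and the net horizontal displacement is $r-\ell=M$. Solving this linear system gives $r=\frac{N+M}{2}$ and $\ell=\frac{N-M}{2}$; in particular $N-M$ must be even, which is exactly the condition that both $(0,0)$ and $(M,N)$ lie in $\mathcal{L}$ and that they are connected by steps in $\mathbb{S}$. Choosing the positions of the $\ell=\frac{N-M}{2}$ left steps among the $N$ steps can be done in $\binom{N}{\frac{N-M}{2}}$ ways, and the bijection of the previous paragraph shows this equals $|L((0,0)\to(M,N))|$. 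Since all steps are unrestricted and carry weight $1$, no weighting subtleties arise.

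I do not expect any serious obstacle here: this is the classical identification of $\{+1,-1\}$-walks with lattice paths, the same argument underlying Pascal's triangle, and the lattice $\mathcal{L}$ was defined precisely so that the parity condition is built in. The only point requiring a sentence of care is noting that every sequence of step choices is admissible (so the map is genuinely onto), and that distinct step sequences give distinct paths (so it is injective) — both are immediate from the fact that consecutive points determine the intervening step uniquely. One could alternatively phrase this as a one-line induction on $N$ using the recursion $|L((0,0)\to(M,N))|=|L((0,0)\to(M-1,N-1))|+|L((0,0)\to(M+1,N-1))|$ together with the Pascal rule, but the bijective argument is cleaner and more self-contained.
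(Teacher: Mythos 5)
Your proof is correct: the identification of a path with the sequence of its $N$ steps, the system $r+\ell=N$, $r-\ell=M$ giving $\ell=\frac{N-M}{2}$, and the choice of positions of the left steps is exactly the classical argument this lemma rests on, and the paper itself states the lemma without proof precisely because it is this standard fact. Nothing is missing; your remarks on surjectivity, injectivity, and the parity condition built into $\mathcal{L}$ cover the only points needing care.
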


\subsection{Wall Restrictions}
\begin{definition} For lattice paths that start at $(0,0)$ we will say  that  $\mathcal{W}^L_d$ with $d\leq 0$  is a \textit{left wall restriction} (relative to $x=0$) if at points $(d,y)$ paths are allowed to take steps of type $\mathbb{S}_R$ only
	\begin{equation*}
	\mathcal{W}^L_{d}=\{(d,y)\to (d+1,y+1)\}.
	\end{equation*}
\end{definition}  
\begin{lemma}
	The number of paths from $(0,0)$ to $(M,N)$ with the set of steps $\mathbb{S}$ 
	and one wall restriction $\mathcal{W}^L_{a}$ can be expressed via the number of unrestricted paths as
	\begin{equation}
	|L((0,0)\to (M,N)\;|\;\mathcal{W}^L_a)|={N\choose\frac{N-M}{2}}-{N\choose\frac{N-M}{2}+a-1},\;\;\text{ for}\;\; M\geq a,
	\end{equation}
\end{lemma}
We considered the left walls located at $x=0$. An example of possible steps for paths descending from $(0,0)$ in the presence of this restriction is given in Figure \ref{wr}.
\begin{figure}[h!]
	{\includegraphics[width=150pt]{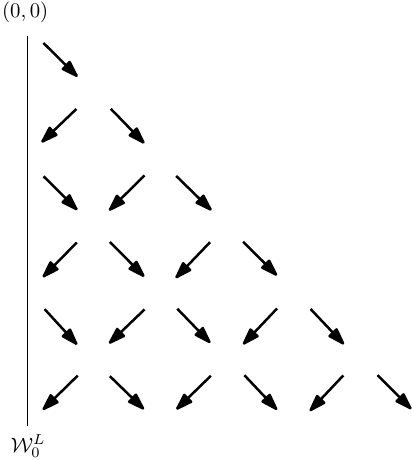}}
	\caption{Arrangement of steps for points of $\mathcal{L}$ in presence of restriction $\mathcal{W}^L_0$.}
	\label{wr}
\end{figure}

\subsection{Filter Restrictions}
\begin{definition}
	For $n\in\mathbb{N}$, we say that there is a filter $\mathcal{F}_d^{n}$ of type $n$, located at $x=d$ if at $x=d-1,\;d,\;d+1$ only the following steps are allowed: 
	\begin{eqnarray*}
		\mathcal{F}_{d}^{n}&=&\{(d-1,y-1)\xrightarrow[]{n}(d,y),\; (d-1,y-1)\to(d-2,y),\\
		& &(d,y)\to(d+1,y+1),\; (d+1,y+1)\to(d+2,y+2),\; (d+1,y+1)\xrightarrow[]{2}(d,y+2)\}.
	\end{eqnarray*}
	The index above the arrow is the weight of the step.
\end{definition}
Note that by default, an arrow with no number at the top means that the corresponding step has a weight of $1$. An example of possible steps for descending paths in the presence of this restriction is given in Figure \ref{F1}. We highlighted steps of weight $2$ with red instead of an arrow with a superscript $2$ for future convenience, as those are the most common for the auxiliary lattice path model and its modifications. We were mostly involved with filters of type $1$, so superscripts $n$ were avoided, leaving Bratteli diagrams with black and red arrows, with weights $1$ and $2$ correspondingly.
\begin{figure}[h!]
	{\includegraphics[width=210pt]{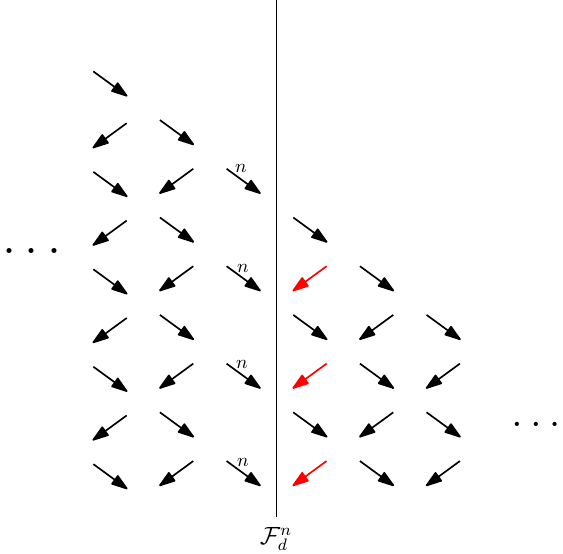}}
	\caption{Filter $\mathcal{F}_d^{n}$. Red arrows correspond to steps $(d+1,y+1)\xrightarrow[]{2}(d,y+2)$  that has a \mbox{weight $2$}. Black arrows with superscript $n$ correspond to steps $(d,y)\xrightarrow[]{n}(d+1,y+1)$. Other steps have \mbox{weight $1$}.}
	\label{F1}
\end{figure}
\begin{lemma}\label{counting_lemma}
	The number of lattice paths from $(0,0)$  to $(M,N)$  with steps from $\mathbb{S}$ and filter restriction $\mathcal{F}_d^{n}$ with $x=d>0$ and $n\in\mathbb{N}$ is
	\begin{equation}
	Z(L_N((0,0)\to (M,N)\;|\;\mathcal{F}_d^{n}))={N\choose\frac{N-M}{2}}- {N\choose\frac{N-M}{2}+d},\;\text{for}\; M<d, 
	\end{equation}
	\begin{equation}
	Z(L_N((0,0)\to (M,N)\;|\;\mathcal{F}_d^{n}))=n {N\choose\frac{N-M}{2}} ,\;\text{for}\; M>d.
	\end{equation}
\end{lemma}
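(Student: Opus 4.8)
The plan is to reduce both regimes to the reflection principle, starting from the count of unrestricted paths $|L((0,0)\to(M,N))|=\binom{N}{(N-M)/2}$. The structural fact driving everything is that $\mathcal{F}_d^n$ is \emph{one-directional}: at $x=d$ the only admissible step is $(d,y)\to(d+1,y+1)$, so a path that ever reaches the line $x=d$ can never afterwards have $x$-coordinate strictly less than $d$. In the regime $M<d$ this forces every admissible path from $(0,0)$ to $(M,N)$ to remain in the half-plane $x\le d-1$; in particular it uses no weighted step and has weight $1$, and conversely every unrestricted path avoiding $x=d$ is admissible with weight $1$. Hence $Z(L((0,0)\to(M,N)\;|\;\mathcal{F}_d^n))$ equals the number of unrestricted paths from $(0,0)$ to $(M,N)$ that do not touch $x=d$. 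Reflecting the endpoint in the line $x=d$ maps $(M,N)$ to $(2d-M,N)$, so by the reflection principle the number of paths that do touch $x=d$ is $\binom{N}{(N-(2d-M))/2}=\binom{N}{(N-M)/2+d}$ (using $\binom{N}{k}=\binom{N}{N-k}$), and subtracting from $\binom{N}{(N-M)/2}$ gives the stated formula.

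For $M>d$ the weighted steps genuinely contribute, and I would argue by induction on $N$. Writing $f(M,N):=Z(L((0,0)\to(M,N)\;|\;\mathcal{F}_d^n))$ and conditioning on the last step, one obtains the plain relation $f(M,N)=f(M-1,N-1)+f(M+1,N-1)$ for $M\notin\{d-1,d,d+1\}$, together with $f(d-1,N)=f(d-2,N-1)$ (the step $(d,N-1)\to(d-1,N)$ is forbidden), $f(d,N)=f(d-1,N-1)+2f(d+1,N-1)$, and $f(d+1,N)=n\,f(d,N-1)+f(d+2,N-1)$. Assuming the $M<d$ formula, the candidate $f(M,N)=n\binom{N}{(N-M)/2}$ for $M>d$, and a suitable expression for the boundary value $f(d,\cdot)$ all at level $N-1$, one verifies them at level $N$ using these relations: away from the filter this is just Pascal's rule, while at the three columns $x\in\{d-1,d,d+1\}$ it collapses, after Pascal, to a binomial identity of the form $\binom{N-1}{m-1}=\binom{N-1}{m+d}$ for the relevant $m$ --- once again the symmetry $\binom{N}{k}=\binom{N}{N-k}$, valid precisely at the index the recursion demands because the reflected endpoint $(2d-M,N)$ sits exactly there. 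The base case is the single monotone path to $(M,N)$ at the smallest admissible $N$, which crosses the filter once and has weight $n$.

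The main obstacle I anticipate is the bookkeeping across the three transition columns $x\in\{d-1,d,d+1\}$: one must see at once that the ``$-\binom{N}{\,\cdot\,+d}$'' correction inherited from the left region is annihilated by the contributions routed through the filter, that the factor $n$ produced by the forced crossing step at $x=d$ survives, and that all of this is consistent with the weighted recursion. A conceptually cleaner but more delicate alternative is a direct weight-preserving bijection: cut an admissible path at its first visit to $x=d$, peel off the forced step $(d,\cdot)\to(d+1,\cdot)$ (this is where the factor $n$ enters), and show that what remains --- a path confined to $x\ge d$ in which every return to $x=d$ is immediately forced back to $x=d+1$ at weight $2$ --- is enumerated, with its weight as multiplicity, by the unrestricted paths in the half-plane $x\ge d$, again via reflection. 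I would try the recursion argument first, since making the ``sticky reflection'' bijection literally weight-preserving takes some care, and turn to the bijection only if the induction becomes unwieldy.
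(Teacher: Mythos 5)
Your treatment of the regime $M<d$ is correct and complete: since the only admissible step out of $x=d$ is the forced step to $x=d+1$, a path ending at $M<d$ can never have visited $x=d$, hence carries weight $1$, and the reflection principle (together with $\binom{N}{k}=\binom{N}{N-k}$) gives exactly the stated expression. This matches what the paper does, which is simply to invoke Lemmas 4.8 and 4.9 of \cite{PS}.

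The gap is in the regime $M>d$, and it sits precisely in the ``bookkeeping across the three transition columns'' that you postpone. If you actually run your induction, the column-$(d+1)$ relation $f(d+1,N)=n\,f(d,N-1)+f(d+2,N-1)$ together with the candidate $f(M,N)=n\binom{N}{(N-M)/2}$ for $M>d$ leaves no freedom for the boundary value: Pascal forces $f(d,N)=\binom{N}{(N-d)/2}$ (no factor $n$). Feeding this into the column-$d$ relation $f(d,N)=f(d-1,N-1)+2f(d+1,N-1)$, and using the symmetry $\binom{N-1}{(N-d)/2+d}=\binom{N-1}{(N-d)/2-1}$ (which is indeed the identity you anticipate, and it does cancel the reflection correction coming from the left region), you are left with $\binom{N-1}{(N-d)/2}+\binom{N-1}{(N-d)/2-1}=\binom{N-1}{(N-d)/2}+(2n-1)\binom{N-1}{(N-d)/2-1}$, which forces $2n=2$. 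So your induction closes only for $n=1$; in fact, with the weights exactly as defined in this paper (crossing weight $n$, return step always of weight $2$) the claimed formula is false for $n\geq 2$: for $d=1$, $(M,N)=(2,4)$ the three admissible paths, through $(1,1),(2,2),(3,3)$, through $(1,1),(2,2),(1,3)$, and through $(-1,1),(0,2),(1,3)$, have weights $n$, $2n^2$, $n$, giving $2n+2n^2$, whereas the lemma predicts $4n$. The formula requires (crossing weight)$\times$(return weight)$=2$, i.e.\ effectively $n=1$, which is the only case used later in the paper (all filters there are $\mathcal{F}^1$); your alternative ``sticky reflection'' bijection hits the same obstruction, since each excursion returning to $x=d$ picks up a factor $2n$ rather than the factor $2$ needed to simulate the forbidden left steps. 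So either restrict the argument (and the statement you prove) to $n=1$, or flag that for general $n$ the lemma as written cannot be established because it fails; as it stands, the unverified three-column check is exactly where your proposal breaks down.
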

\begin{proof}
	The proof is the same as for Lemma 4.8 and Lemma 4.9 in \cite{PS}.
\end{proof}

\subsection{Counting Paths in the Auxiliary Lattice Path Model}
Consider the lattice path model for the set of paths on $\mathcal{L}$ descending from $(0,0)$ to $(M,N)$ with steps $\mathbb{S}$ in the presence of restrictions $\mathcal{W}^L_0$, $\mathcal{F}^1_{l-1}$, $\mathcal{F}^2_{nl-1}$, $n\in\mathbb{N}$, $n\geq 2$. Such set is denoted as
\begin{equation*}
L_N((0,0)\to (M,N);\mathbb{S}\;|\;\mathcal{W}^L_0, \mathcal{F}^1_{l-1}, \{\mathcal{F}^2_{nl-1}\}_{n=2}^{\infty})
\end{equation*}
and such a model is called the auxiliary lattice path model. The main theorem of \cite{PS} gives an explicit formula for weighted numbers of paths in the auxiliary lattice path model. Then, in \cite{LPRS}, periodicity conditions $(M+2l,N)=(M,N)$, $M, N\geq l-1$ were applied, resulting in a folded Brattelli diagram. For such a diagram, recursion on the weighted numbers of paths coincides with recursion on multiplicities of indecomposable $u_q(sl_2)$-modules in tensor product decomposition of $T(1)^{\otimes N}$. Note that due to properties of the category $\mathbf{Rep}(u_q(sl_2))$, we mostly considered odd values of $l$; however, the results remain to be true for even values of $l$ as well.

Before coming to modifications of the auxiliary lattice path model relevant to the representation theory of $U_q(sl_2)$ at the roots of unity, we need to slightly tweak it. We are interested in paths descending from $(0,0)$ to $(M,N)$ with steps $\mathbb{S}$ in the presence of restrictions $\mathcal{W}^L_0$, $\mathcal{F}^1_{nl-1}$, $n\in\mathbb{N}$, instead of filters of type 2. Such lattice path model is depicted in Figure \ref{stalpm}.
\begin{figure}[h!]
	{\includegraphics[width=340pt]{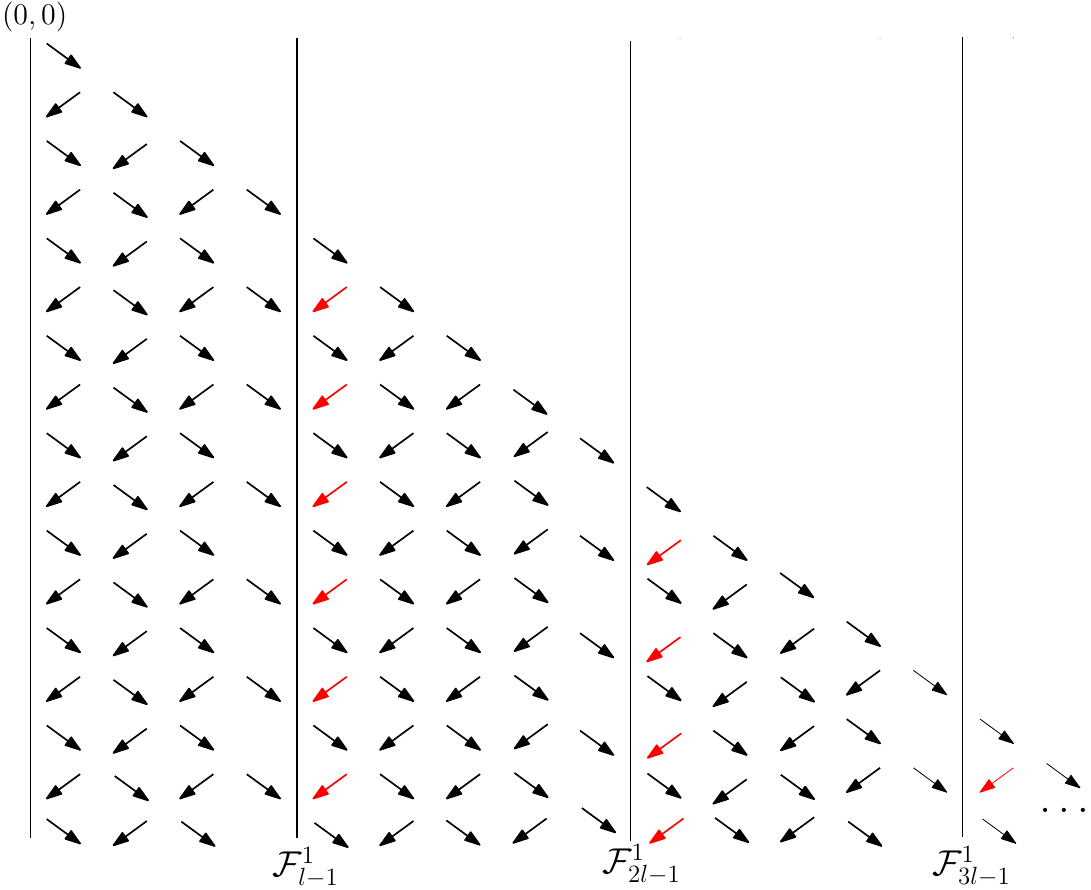}}
	\caption{{Arrangement} 
		of steps for points of $\mathcal{L}$ in the considered, slightly tweaked version of the auxiliary lattice path model. Here, we depict the case, where $l=5$.}
	\label{stalpm}
\end{figure}
\begin{definition}
	We denote by multiplicity function in the $j$-th strip $M^j_{(M,N)}$ the weighted number of paths in set 
	\begin{equation*}
	L_N((0,0)\to (M,N);\mathbb{S}\;|\;\mathcal{W}^L_0,\{\mathcal{F}_{nl-1}^{1}\}, n\in \mathbb{N})
	\end{equation*}
	with the endpoint $(M,N)$ that lies within $(j-1)l-1 \leq M \leq jl-2$
	\begin{equation}
	M^j_{(M,N)} = Z(L_N((0,0)\to (M,N);\mathbb{S}\;|\;\mathcal{W}^L_0,\{\mathcal{F}_{nl-1}^{1}\}, n\in \mathbb{N})),
	\end{equation}
	where $M\geq 0$ and $j=\Big[\frac{M+1}{l}+1\Big]$. 
\end{definition}
Now consider the version of the main theorem in \cite{PS} corresponding to this model.
\begin{theorem}[\cite{PS}]\label{mainps}
	The multiplicity function in the $j$-th strip  is given by
	\begin{eqnarray*}
		M^{j}_{(M,N)} =\sum_{k=0}^{\big[\frac{N-(j-1)l+1}{4l}\big]}P_j(k)F^{(N)}_{M+4kl}+\sum_{k=0}^{\big[\frac{N-jl}{4l}\big]}P_j(k)F^{(N)}_{M-4kl-2jl}-\\
		-\sum_{k=0}^{\big[\frac{N-(j+1)l+1}{4l}\big]}Q_j(k)F^{(N)}_{M+2l+4kl}-\sum_{k=0}^{\big[\frac{N-jl-2l}{4l}\big]}Q_j(k)F^{(N)}_{M-4kl-2(j+1)l},
	\end{eqnarray*}
	where
	\begin{equation}
	P_j(k)=\sum_{i=0}^{\big[\frac{j}{2}\big]}\binom{j-2}{2i}\binom{k-i+j-2}{j-2},\;\;\;\;\;
	Q_j(k)=\sum_{i=0}^{\big[\frac{j}{2}\big]}\binom{j-2}{2i+1}\binom{k-i+j-2}{j-2},
	\end{equation}
	\begin{equation*}
	F_M^{(N)}= {N\choose\frac{N-M}{2}}-{N\choose{\frac{N-M}{2}-1}}.
	\end{equation*}
\end{theorem}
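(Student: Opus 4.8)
The plan is to prove the formula by the method of images (iterated reflection / inclusion--exclusion), using Lemma \ref{counting_lemma} and the wall lemma as the elementary building blocks. A path counted by $M^j_{(M,N)}$ runs from $(0,0)$, is partially reflected by the left wall $\mathcal{W}^L_0$, and must pass the $j-1$ filters $\mathcal{F}^1_{l-1},\dots,\mathcal{F}^1_{(j-1)l-1}$ before landing in the $j$-th strip, the next filter $\mathcal{F}^1_{jl-1}$ lying just to the right of the strip. The first step is to isolate the effect of a single filter: by Lemma \ref{counting_lemma} with $n=1$, for endpoints to the right of a filter at $x=d$ the weighted count equals the unrestricted one, while for endpoints to the left it equals the count with a wall at the filter's right edge $x=d+1$. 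In other words, the weight-$2$ back-steps make the filter ``transparent'' for forward traffic and turn it into an ordinary reflecting wall for everything to its left; so a filter, viewed from a fixed side, either does nothing or contributes a single reflected image.

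Building on this, I would describe the whole configuration of obstacles --- the wall at $x=0$ together with the filters at $x=nl-1$ --- by the group of reflections it generates (an infinite dihedral-type group, since reflections through two consecutive filters compose to a translation by $2l$). Reflecting the source $(0,0)$ through words in this group produces a family of image points, each at a position obtained from $\pm M$ by a shift in $2l\mathbb{Z}$ and each carrying a sign $(-1)^{\nu}$ for an integer $\nu$ attached to the word; by the first step the filter reflections contribute no extra weight, only a relocation. Summing the corresponding ballot numbers $F^{(N)}_{\bullet}$ (which are exactly the wall-restricted counts of the wall lemma) over all images reachable from $(0,0)$ in $N$ steps should reproduce the four sums in the theorem: the reachability condition produces the floor-function upper limits on $k$, and the four sums correspond to two image locations (the images of $M$ and of $-M$) crossed with the parity that separates the $P_j$-weighted terms from the $Q_j$-weighted ones.

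The substantive point is to show that the number of reflection words carrying $(0,0)$ to the image at shift $4kl$ (resp.\ $4kl+2l$) equals $P_j(k)$ (resp.\ $Q_j(k)$), with the claimed signs. I would do this by induction on the strip index $j$: the base cases $j=1$ (wall only) and $j=2$ (wall and one filter) follow directly from the two lemmas, and the inductive step ``transfers'' a path across the filter $\mathcal{F}^1_{(j-1)l-1}$ by decomposing it at its last visit to that filter's edge and resumming the bounded back-and-forth excursions against the filter as a geometric-type series. This produces a Pascal-like recursion for the coefficients, and the final step is the routine verification that $P_j(k)=\sum_{i}\binom{j-2}{2i}\binom{k-i+j-2}{j-2}$ and $Q_j(k)=\sum_{i}\binom{j-2}{2i+1}\binom{k-i+j-2}{j-2}$ solve it with the correct initial data. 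Alternatively one can bypass the induction and simply check that the displayed right-hand side satisfies the defining $N$-recursion of $M^j_{(M,N)}$ together with the boundary conditions imposed at each filter.

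The main obstacle I expect is the bookkeeping in the transfer step: tracking which reflected images survive, with which signs and which summation ranges, while the excursions against each filter are resummed, and then matching the outcome against the binomial identities defining $P_j$ and $Q_j$. This is precisely the computation carried out for the original auxiliary model in \cite{PS}; the only structural difference here is that every filter is of type $1$, so each forward crossing carries weight $1$ rather than $2$. I therefore expect the argument of \cite{PS} to go through after simplifying the filter weights, with genuine care needed only at the places where the type of the later filters entered the original proof.
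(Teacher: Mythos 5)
Your proposal is correct and takes essentially the same route as the paper: the paper's proof of Theorem \ref{mainps} is literally ``the same as the main theorem of \cite{PS}, with Lemma \ref{counting_lemma} used in place of Lemma 4.9 to account for all filters now being of type $1$,'' which is exactly your concluding reduction. The additional detail you supply (iterated reflections, transparent-forward/reflecting-backward filters, and the transfer-across-a-filter induction producing the recursion solved by $P_j$ and $Q_j$) is the content of the cited computation in \cite{PS} rather than a new argument, so nothing further is needed.
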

\begin{proof}
	The proof is the same as the proof of the main theorem in \cite{PS}, except that instead of Lemma 4.9 in \cite{PS}, for the slightly tweaked model one should use Lemma \ref{counting_lemma}.
\end{proof}
From now on, when mentioning the auxiliary lattice path model, we mean its slightly tweaked version. This model will be further modified in subsequent sections. Instead of applying periodicity conditions, as for $u_q(sl_2)$, we enhance this model with long steps, source and target points which are located near filters. As a result, recursion for the weighted numbers of paths on the resultant Bratteli diagram recreates recursion for multiplicities of indecomposable $U_q(sl_2)$-modules in the decomposition of $T(1)^{\otimes N}$.

\section{Boundary Points and Congruent Regions}\label{congandbound}
In this section, we consider notions, which are convenient for counting paths in the auxiliary lattice path model in the presence of long steps. We will see, that multiplicities on the boundary of a region uniquely define multiplicities in the rest of the region. For proving identities between multiplicities in two congruent regions, it is sufficient to prove such identities for their boundary points.
\begin{definition}
	Consider the lattice path model, defined by a set of steps $\mathbb{S}$ and a set of restrictions $\mathcal{C}$ on lattice $\mathcal{L}$. Subset $\mathcal{L}_0\subset\mathcal{L}$ with steps $\mathbb{S}$ and restrictions $\mathcal{C}$ is called a region of the lattice path model under consideration.
\end{definition}
Intuitively, region $\mathcal{L}_0\subset\mathcal{L}$ is a restriction of the lattice path model defined by $\mathbb{S}$, $\mathcal{C}$ on lattice $\mathcal{L}$ to the subset $\mathcal{L}_0$. The word 'restriction' is overused, so we consider regions of lattice path models instead.
\begin{definition}\label{bounddef}
	Consider $\mathcal{L}_0\subset\mathcal{L}$ a region of the lattice path model defined by steps $\mathbb{S}$ and restrictions $\mathcal{C}$. Point $B\in\mathcal{L}_0$ is called a boundary point of $\mathcal{L}_0$ if there exists $B^\prime\in\mathcal{L}$, $B^\prime\notin\mathcal{L}_0$ such that step $B^\prime \to B$ is allowed in $\mathcal{L}$ by a set of steps $\mathbb{S}$ and restrictions $\mathcal{C}$. The union of all such points is a boundary of $\mathcal{L}_0$ and is denoted by $\partial\mathcal{L}_0$.
\end{definition}
The definition \ref{bounddef} introduces a notion, reminiscent of the outer boundary in graph theory. Note that boundary points are defined with respect to some lattice path models under consideration. For brevity, we assume that this lattice path model is known from the context, and mentioning it will be mostly omitted.
\begin{example}
	For a strip in the auxiliary lattice path model, its boundary is in the left filter. It is depicted in Figure \ref{bstrip}.
	\begin{figure}[h!]
		{\includegraphics[width=280pt]{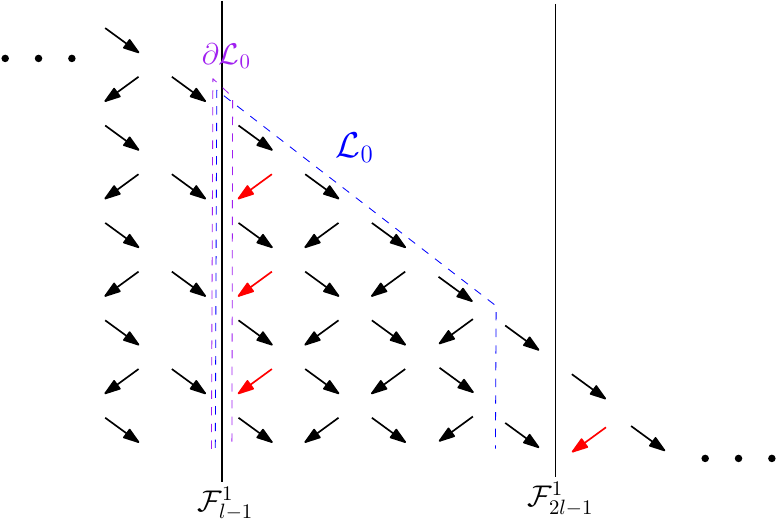}}
		\caption{Region $\mathcal{L}_0$, highlighted with blue dashed lines, is a $2$nd strip for $l=5$. Its boundary $\partial\mathcal{L}_0$ is a set of points in the left filter restriction $\mathcal{F}^1_{l-1}$, which is highlighted with purple dashed lines.} 
		\label{bstrip}
	\end{figure}		
\end{example}
\begin{example}
	Consider region $\mathcal{L}_0$ of the unrestricted lattice path model, as depicted in Figure \ref{bpascal} and highlighted with blue dashed lines. Its boundary is a set of points highlighted with purple \mbox{dashed lines}. 
	\begin{figure}[h!]
		{\includegraphics[width=280pt]{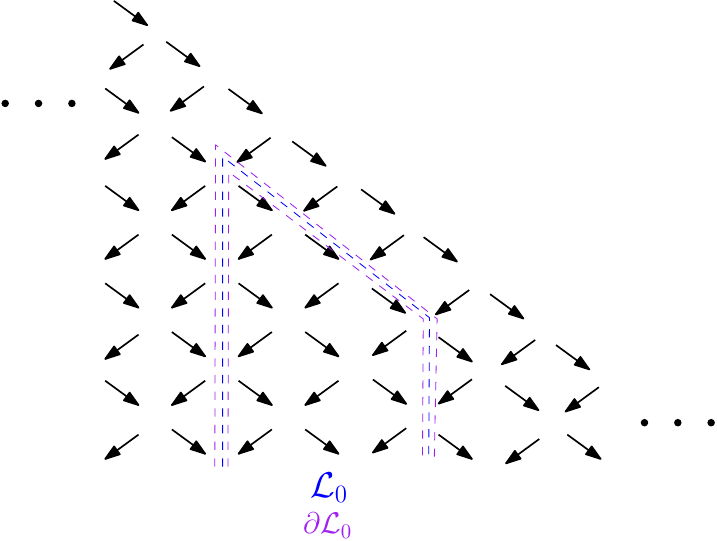}}
		\caption{Region $\mathcal{L}_0$ is highlighted with blue dashed lines. Its boundary $\partial\mathcal{L}_0$ is a set of points highlighted with purple dashed lines.}
		\label{bpascal}
	\end{figure}
\end{example}
\begin{lemma}\label{BD} 
	Consider region $\mathcal{L}_0$ of a lattice path model defined by  $\mathbb{S}$, $\mathcal{C}$ on lattice $\mathcal{L}$. Weighted numbers of paths $Z(L_N((0,0)\to (M,N);\ldots)$ for $(M,N)\in\mathcal{L}_0$ are uniquely defined by weighted numbers of paths for its boundary points $\partial\mathcal{L}_0$.
\end{lemma}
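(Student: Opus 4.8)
The plan is a routine induction built on the local recursion for weighted path counts on a Bratteli diagram. Write $Z(P)$ as shorthand for $Z(L_N((0,0)\to P;\ldots))$, $P\in\mathcal{L}$, and let $N(P)$ denote the second coordinate of $P$. Splitting a path according to its last step gives, for every $P\neq(0,0)$,
\[
Z(P)=\sum_{P'\,:\,(P'\to P)\ \text{allowed}}\omega(P'\to P)\,Z(P'),
\]
where $P'$ ranges over the points of $\mathcal{L}$ from which a step to $P$ is permitted by $\mathbb{S}$ and $\mathcal{C}$ (the sum being empty, and $Z(P)=0$, if there is no such $P'$), together with the initial value $Z((0,0))=1$. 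Since every step occurring in the models of this paper --- the steps of $\mathbb{S}$ as well as the long steps introduced in Section \ref{MALPM} --- strictly increases the second coordinate, each $P'$ on the right has $N(P')<N(P)$; thus the models are graded by $N$, and moreover $Z(P)=0$ whenever $N(P)\le 0$ and $P\neq(0,0)$, so these extremal values, like $Z((0,0))=1$, do not depend on $\mathcal{L}_0$.

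The single structural ingredient I would isolate is the contrapositive of Definition \ref{bounddef}: if $P\in\mathcal{L}_0$ and $P\notin\partial\mathcal{L}_0$, then every $P'$ admitting an allowed step $P'\to P$ already lies in $\mathcal{L}_0$, since a predecessor outside $\mathcal{L}_0$ is exactly what would make $P$ a boundary point. I would then prove, by strong induction on $N(P)$, the assertion: for every $P\in\mathcal{L}_0$ the number $Z(P)$ is uniquely determined by the family $\{Z(Q):Q\in\partial\mathcal{L}_0\}$. For $N(P)\le 0$ this is immediate from the previous paragraph. For the inductive step, if $P\in\partial\mathcal{L}_0$ the value is one of the prescribed boundary numbers; otherwise $P\in\mathcal{L}_0\setminus\partial\mathcal{L}_0$, and applying the recursion expresses $Z(P)$ through the values $Z(P')$ at its allowed predecessors, all of which lie in $\mathcal{L}_0$ (by the structural ingredient) and satisfy $N(P')<N(P)$ (by the grading), hence are determined by the boundary data by the induction hypothesis. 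Therefore so is $Z(P)$, completing the induction and the proof.

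I do not expect any genuine obstacle here: the content is entirely the observation that the defining recursion never reaches outside $\mathcal{L}_0$ once the boundary values are known. The only points needing care are bookkeeping --- stating the last-step recursion correctly in the weighted setting (weights multiply along a path, counts add over the choice of last step, and the source value $1$ is imposed separately) --- and making the grading explicit, which is where it is used that the long steps, too, are directed strictly downward in the Bratteli diagram, so that the induction on $N$ terminates. The same argument in fact runs the recursion forward from \emph{any} prescribed data on $\partial\mathcal{L}_0$, which is what makes the notion useful for comparing two models, as indicated above.
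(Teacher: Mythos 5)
Your proposal is correct and takes essentially the same route as the paper: the paper also unwinds the last-step recursion, observing that a predecessor outside $\mathcal{L}_0$ forces the point to be in $\partial\mathcal{L}_0$ (its Case 2), and that the backward iteration terminates because paths descend. Your strong induction on the second coordinate, with the contrapositive of Definition \ref{bounddef} as the structural ingredient, is simply a cleaner formalization of the paper's ``the first case can be iterated finitely many times'' argument.
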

\begin{proof}
	Suppose weighted numbers of paths for $\partial\mathcal{L}_0$ are known. Suppose that there exists some point $A\in\mathcal{L}_0$, such that its weighted number of paths cannot be expressed in terms of weighted numbers of paths for points in $\partial\mathcal{L}_0$.\par 
	The first case is that recursion for a weighted number of paths for $A$ involves some point $A^\prime\in\mathcal{L}_0$, a weighted number of paths for which cannot be expressed in terms of such for points in $\partial\mathcal{L}_0$. In this case, we need to consider $A^\prime$ and recursion on the weighted number of paths for such a point instead of $A$.\par
	The second case is that recursion for a weighted number of paths for $A$ involves a weighted number of paths for some point $A^\prime\notin\mathcal{L}_0$. Then, $A\in\partial\mathcal{L}_0$ by definition of a boundary point and weighted number of paths for such point is known by the initial supposition of the lemma.\par
	Note that due to the fact that we consider descending paths, $M$ and $N$, to be finite, the first case can be iterated finitely many times at most. 
\end{proof} 

\begin{definition}\label{defcong}
	Consider two lattice path models with steps $\mathbb{S}_1$, $\mathbb{S}_2$ and restrictions $\mathcal{C}_1$, $\mathcal{C}_2$ defined on lattice $\mathcal{L}$. Subset $\mathcal{L}_1\subset\mathcal{L}$ is a region in the lattice path model defined by $\mathbb{S}_1,\mathcal{C}_1$. Subset $\mathcal{L}_2\subset\mathcal{L}$ is a region in the lattice path model defined by $\mathbb{S}_2,\mathcal{C}_2$. Regions $\mathcal{L}_1$ and $\mathcal{L}_2$ are congruent if there exists a translation $T$ in $\mathcal{L}$ such that
	\begin{itemize}
		\item $T\mathcal{L}_1=\mathcal{L}_2$ as sets of points in $\mathcal{L}$
		\item Translation $T$ induces a bijection between steps in $\mathcal{L}_1$ and $\mathcal{L}_2$, meaning that there is a one-to-one correspondence between steps with source and target points related by $T$, with preservation of weights.
	\end{itemize}
\end{definition}
The second condition can be written down explicitly. Firstly, for each $(M,N)\in\mathcal{L}_1$ and each step $(M,N)\xrightarrow{w}(P,Q)$ in $\mathbb{S}_1$ obeying $\mathcal{C}_1$ such that $(P,Q)\in\mathcal{L}_1$, there is a step $(M^\prime,N^\prime)\xrightarrow{w}(P^\prime,Q^\prime)$ in $\mathbb{S}_2$ obeying $\mathcal{C}_2$, where $T(M,N)=(M^\prime,N^\prime)$, $T(P,Q)=(P^\prime,Q^\prime)$. Secondly, for each $(M^\prime,N^\prime)\in\mathcal{L}_2$ and each step $(M^\prime,N^\prime)\xrightarrow{w}(P^\prime,Q^\prime)$ in $\mathbb{S}_2$ obeying $\mathcal{C}_2$ such that $(P^\prime,Q^\prime)\in\mathcal{L}_2$, there is a step $(M,N)\xrightarrow{w}(P,Q)$ in $\mathbb{S}_1$ obeying $\mathcal{C}_1$, where $T^{-1}(M^\prime,N^\prime)=(M,N)$, $T^{-1}(P^\prime,Q^\prime)=(M,N)$. To put it simply, if we forget about lattice path models outside $\mathcal{L}_1$ and $\mathcal{L}_2$, these two regions will be indistinguishable. Due to translations in $\mathcal{L}$ being invertible, it is easy to see that congruence defines an equivalence relation.

Now we must prove the main theorem of this subsection.
\begin{theorem}\label{BD_cor}
	Consider two lattice path models with steps $\mathbb{S}_1$, $\mathbb{S}_2$ and restrictions $\mathcal{C}_1$, $\mathcal{C}_2$ defined on lattice $\mathcal{L}$. Region $\mathcal{L}_1$ of the lattice path model defined by $\mathbb{S}_1$, $\mathcal{C}_1$ is congruent to region $\mathcal{L}_2$ of the lattice path model defined by $\mathbb{S}_2$, $\mathcal{C}_2$, where $T\mathcal{L}_1=\mathcal{L}_2$. If equality
	\begin{equation}\label{eqBD0}
	Z(L_N((0,0)\to (M,N);\mathbb{S}_1\;|\;\mathcal{C}_1))=Z(L_N((0,0)\to T(M,N);\mathbb{S}_2\;|\;\mathcal{C}_2))
	\end{equation}
	holds for all $(M,N)\in\partial\mathcal{L}_1\cup T^{-1}(\partial\mathcal{L}_2)$, then it holds for all $(M,N)\in\mathcal{L}_1$.
\end{theorem}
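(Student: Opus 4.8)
The plan is to propagate the equality (\ref{eqBD0}) from the set $\partial^{\ast}:=\partial\mathcal L_1\cup T^{-1}(\partial\mathcal L_2)$ into the interior of $\mathcal L_1$ by the same iterated‑recursion argument used for Lemma \ref{BD}; the only ingredient beyond Lemma \ref{BD} is that congruence (Definition \ref{defcong}) identifies the two path‑counting recursions at every point of $\mathcal L_1$ lying outside $\partial^{\ast}$. Write $Z_1(M,N):=Z(L_N((0,0)\to(M,N);\mathbb S_1\,|\,\mathcal C_1))$ and $Z_2(M,N):=Z(L_N((0,0)\to T(M,N);\mathbb S_2\,|\,\mathcal C_2))$ for $(M,N)\in\mathcal L_1$, and put $f:=Z_1-Z_2$ on $\mathcal L_1$. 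By hypothesis $f$ vanishes on $\partial^{\ast}$, and one must show $f\equiv 0$ on all of $\mathcal L_1$.

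First I would record the recursion that either weighted path count obeys: conditioning a descending path on its last step, for every point $A\neq(0,0)$ of a lattice path model one has
\[
Z(L_N((0,0)\to A;\mathbb S\,|\,\mathcal C))=\sum_{A'\to A} w(A'\to A)\,Z(L_N((0,0)\to A';\mathbb S\,|\,\mathcal C)),
\]
the sum being over all steps of $\mathbb S$ obeying $\mathcal C$ with target $A$ and $w(A'\to A)$ the weight of that step. Now fix $B\in\mathcal L_1\setminus\partial^{\ast}$. Since $B\notin\partial\mathcal L_1$, Definition \ref{bounddef} forces every allowed step of the first model with target $B$ to have its source in $\mathcal L_1$, hence to be a step of $\mathcal L_1$ in the sense of Definition \ref{defcong}; and since $B\notin T^{-1}(\partial\mathcal L_2)$, i.e. $T(B)\notin\partial\mathcal L_2$, every allowed step of the second model with target $T(B)$ has its source in $\mathcal L_2$. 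Congruence then yields, via $T$, a weight‑preserving bijection between the steps into $B$ and the steps into $T(B)$. Subtracting the two instances of the recursion and pairing each source $B'\in\mathcal L_1$ with $T(B')\in\mathcal L_2$ gives the homogeneous recursion
\[
f(B)=\sum_{B'\to B} w(B'\to B)\,f(B'),\qquad B\in\mathcal L_1\setminus\partial^{\ast},
\]
in which, crucially, every source $B'$ again belongs to $\mathcal L_1$.

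To conclude I would rerun the reasoning in the proof of Lemma \ref{BD}, now with $f$ in place of the weighted path count and $\partial^{\ast}$ in place of the boundary. If $f(A)\neq 0$ for some $A\in\mathcal L_1$, then $A\notin\partial^{\ast}$, so the displayed homogeneous recursion holds at $A$ and must contain a source $B'\in\mathcal L_1$ with $f(B')\neq 0$; again $B'\notin\partial^{\ast}$, and the argument repeats at $B'$. Since the paths are descending, the second coordinate strictly decreases along such a chain of sources while staying nonnegative, so the chain terminates, a contradiction. Hence $f\equiv 0$ on $\mathcal L_1$, which is exactly (\ref{eqBD0}) for all $(M,N)\in\mathcal L_1$. (If $(0,0)\in\mathcal L_1$, it is included among the base points of the recursion; in the models considered here congruence then forces $T(0,0)=(0,0)$, so $Z_1(0,0)=Z_2(0,0)=1$ and $f(0,0)=0$, and the chain above never reaches $(0,0)$.)

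The step I expect to be the main obstacle is the matching of the two recursions, and its delicacy is precisely why the hypothesis is imposed on the union $\partial\mathcal L_1\cup T^{-1}(\partial\mathcal L_2)$ rather than on $\partial\mathcal L_1$ alone: a point $B\in\mathcal L_1$ can fail to be a boundary point of $\mathcal L_1$ while $T(B)$ is a boundary point of $\mathcal L_2$, or conversely, since boundary‑ness is read off the two ambient models, which differ away from the regions. One must therefore invoke Definition \ref{defcong} only for steps internal to $\mathcal L_1$ and $\mathcal L_2$, and treat a point as a boundary point the moment either it or its $T$‑image receives a step from outside its region; once this bookkeeping is in place, the remainder is the finiteness argument of Lemma \ref{BD}.
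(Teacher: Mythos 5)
Your proposal is correct and follows essentially the same route as the paper's proof: the paper expresses both sides of (\ref{eqBD0}) through the Lemma \ref{BD} recursion in terms of their values on $\partial\mathcal{L}_1\cup T^{-1}(\partial\mathcal{L}_2)$ and compares term by term, which is exactly your argument recast for the difference $f=Z_1-Z_2$ (homogeneous recursion at points outside the union of boundaries, identified via Definition \ref{defcong}, plus finiteness of descending chains). Your explicit justification of why the hypothesis must be imposed on the union $\partial\mathcal{L}_1\cup T^{-1}(\partial\mathcal{L}_2)$ matches the remark the paper makes after the theorem statement, so no further comparison is needed.
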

Note, that if $(M,N)\in\partial\mathcal{L}_1$ it does not necessarily follow that $T(M,N)\in\partial\mathcal{L}_2$, due to $\mathcal{C}_1$ and $\mathcal{C}_2$ being different. So, it is natural to ask Formula (\ref{eqBD0}) to hold for $\partial\mathcal{L}_1\cup T^{-1}(\partial\mathcal{L}_2)$.
\begin{proof}
	We need to prove that Formula (\ref{eqBD0}) is true for $(M,N)\in\mathcal{L}_1$. The l.h.s. can be uniquely expressed in terms of its values at $\partial\mathcal{L}_1\cup T^{-1}(\partial\mathcal{L}_2)$, following procedure in Lemma \ref{BD}. Due to the congruence between $\mathcal{L}_1$ and $\mathcal{L}_2$, recursion for the r.h.s. of (\ref{eqBD0}) coincides with the one for the l.h.s., so we can obtain the same expression on the r.h.s., but with values of weighted numbers of paths for $T(\partial\mathcal{L}_1\cup T^{-1}(\partial\mathcal{L}_2))=T(\partial\mathcal{L}_1)\cup\partial\mathcal{L}_2$ instead of $\partial\mathcal{L}_1\cup T^{-1}(\partial\mathcal{L}_2)$. We can compare the l.h.s. and the r.h.s. term by term, for points related by translation $T$. All of such terms have the same values due to the initial supposition of the theorem.
\end{proof}
\begin{corollary}\label{BD_cor_cor}
	Consider lattice path models with steps $\mathbb{S}_1$, $\mathbb{S}_2$, $\mathbb{S}_3$ and restrictions $\mathcal{C}_1$, $\mathcal{C}_2$, $\mathcal{C}_3$ defined on lattice $\mathcal{L}$. Region $\mathcal{L}_1$ is congruent to $\mathcal{L}_2$ and $\mathcal{L}_3$, where $T_1(M_1,N_1)=(M_2,N_2)$, $T_2(M_1,N_1)=(M_3,N_3)$ for $(M_1,N_1)\in\mathcal{L}_1$. If equality
	\begin{eqnarray}\label{eqBD_cor_cor}
	& Z(L_N((0,0)\to (M,N);\mathbb{S}_1\;|\;\mathcal{C}_1)) = \nonumber \\
	& =Z(L_N((0,0)\to T_1(M,N);\mathbb{S}_2\;|\;\mathcal{C}_2))+Z(L_N((0,0)\to T_2(M,N);\mathbb{S}_3\;|\;\mathcal{C}_3))
	\end{eqnarray}
	holds for all $(M,N)\in\partial\mathcal{L}_1\cup T_1^{-1}(\partial\mathcal{L}_2)\cup T_2^{-1}(\partial\mathcal{L}_3)$, then it holds for all $(M,N)\in \mathcal{L}_1$.
\end{corollary}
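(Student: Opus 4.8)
The plan is to run the argument of Theorem \ref{BD_cor} one more time, now carrying along the fact that the right-hand side of (\ref{eqBD_cor_cor}) is a sum of two weighted path counts pulled back from two different models. Write $\mathcal{B}=\partial\mathcal{L}_1\cup T_1^{-1}(\partial\mathcal{L}_2)\cup T_2^{-1}(\partial\mathcal{L}_3)$, and regard both sides of (\ref{eqBD_cor_cor}) as functions on $\mathcal{L}_1$: let $f(M,N)=Z(L_N((0,0)\to (M,N);\mathbb{S}_1\;|\;\mathcal{C}_1))$, and let $g(M,N)$ be the sum on the right-hand side. By hypothesis $f=g$ on $\mathcal{B}$, and the task is to propagate this equality to all of $\mathcal{L}_1$.

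First I would check that $f$ and $g$ obey one and the same recursion on $\mathcal{L}_1\setminus\mathcal{B}$. For $f$ this is the defining recursion of the model $(\mathbb{S}_1,\mathcal{C}_1)$, and by the definition of a boundary point every point whose recursion reaches outside $\mathcal{L}_1$ lies in $\partial\mathcal{L}_1\subseteq\mathcal{B}$, so on $\mathcal{L}_1\setminus\mathcal{B}$ that recursion refers only to points of $\mathcal{L}_1$. For $g$, each summand is, via the congruence $T_1$ (resp.\ $T_2$), the pullback of a weighted path count obeying the recursion of $(\mathbb{S}_2,\mathcal{C}_2)$ on $\mathcal{L}_2$ (resp.\ of $(\mathbb{S}_3,\mathcal{C}_3)$ on $\mathcal{L}_3$); since a congruence carries steps to steps with the same weights, these pulled-back recursions are literally the $(\mathbb{S}_1,\mathcal{C}_1)$-recursion on $\mathcal{L}_1$, and because a recursion expresses the value at a point as a fixed weighted linear combination of the values at its predecessors, the sum $g$ obeys that same linear recursion. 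The point of including $T_1^{-1}(\partial\mathcal{L}_2)$ and $T_2^{-1}(\partial\mathcal{L}_3)$ in $\mathcal{B}$ is exactly to guarantee that for $(M,N)\in\mathcal{L}_1\setminus\mathcal{B}$ the recursion for the first summand stays inside $\mathcal{L}_2$ and that for the second stays inside $\mathcal{L}_3$, so that no value outside a transported region is ever invoked.

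Finally I would invoke the finiteness argument of Lemma \ref{BD}: since $M$ and $N$ are bounded along descending paths, the recursion on $\mathcal{L}_1\setminus\mathcal{B}$ unwinds in finitely many steps, so $f$ and $g$ are each determined on all of $\mathcal{L}_1$ by their restrictions to $\mathcal{B}$; as these restrictions coincide, $f=g$ on $\mathcal{L}_1$, which is the assertion. I expect the only delicate point to be the bookkeeping just described — verifying that outside $\mathcal{B}$ all three recursions (the one for $f$ and the two transported ones inside $g$) remain internal to their respective regions — with everything else reducing to the linearity of the recursion together with the remark, recorded after Definition \ref{defcong}, that congruence is an equivalence relation. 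A shorter but essentially equivalent route is to apply Theorem \ref{BD_cor} directly to the ``doubled'' model living on two disjoint copies of $\mathcal{L}$ carrying $(\mathbb{S}_2,\mathcal{C}_2)$ and $(\mathbb{S}_3,\mathcal{C}_3)$, whose path count to a pair of target points is the sum of the two counts; setting that model up carefully costs about as much as the direct argument.
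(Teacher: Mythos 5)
Your argument is correct and is essentially the paper's own: the paper proves the corollary by noting that, due to linearity of the right-hand side of (\ref{eqBD_cor_cor}), the proof of Theorem \ref{BD_cor} repeats verbatim — i.e.\ both sides satisfy the same recursion away from $\partial\mathcal{L}_1\cup T_1^{-1}(\partial\mathcal{L}_2)\cup T_2^{-1}(\partial\mathcal{L}_3)$ and are determined by their boundary values via Lemma \ref{BD}. Your write-up simply makes explicit the bookkeeping (that the enlarged boundary set keeps all three recursions internal to their regions) that the paper leaves implicit.
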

\begin{proof}
	Due to linearity of the r.h.s. of Formula (\ref{eqBD_cor_cor}), the proof repeats the one of \mbox{Theorem \ref{BD_cor}}.
\end{proof}
The moral of this section is that for two congruent regions, weighted numbers of paths are defined by values of such at the boundary of the considered regions. For proving identities, it is sufficient to establish equality for weighted numbers of paths at boundary points, while equality for the rest of the region will follow due to the congruence.

\section{The Auxiliary Lattice Path Model in the Presence of Long Steps}\label{MALPM}
\subsection*{Long Steps in Lattice Path Models with Filter Restrictions}
Long step is a step $(x,y)\xrightarrow[]{w}(x^\prime,y+1)$ in $\mathcal{L}$ such that $|x-x^\prime|>1$. We denote the sequence of long steps as
\begin{equation*}
\mathbb{S}[M_1,M_2]=\{(M_1,M_1+2m)\to(M_2,M_1+1+2m)\}_{m=0}^{\infty},
\end{equation*}
where $x=M_1$ is the source point for the sequence and $x=M_2$ is the target point, $|M_1-M_2|>1$. For the purposes of this paper, we are mainly interested in sequences
\begin{equation*}
\mathbb{S}(k)\equiv\mathbb{S}[l(k+2)-2,lk-1]=\{(l(k+2)-2,lk-2+2m)\to(lk-1,lk-2+1+2m)\}_{m=0}^{\infty}, 
\end{equation*}
where $k\in\mathbb{N}$ and $\mathcal{C}$ consists of $\mathcal{F}^1_{lk-1}$ and $\mathcal{F}^1_{l(k+2)-1}$. We need such sequences of long steps for modification of the auxiliary lattice path model, relevant to the representation theory of $U_q(sl_2)$ at roots of unity.
\begin{lemma} \label{lstep_property}
	Fix $k\in\mathbb{N}$. Let
	\begin{equation*}
	Z_{(M,N)}\equiv Z(L_N((0,0)\to(M,N));\mathbb{S}\;|\;\mathcal{F}^1_{lk-1}, \mathcal{F}^1_{l(k+2)-1})
	\end{equation*}
	be the weighted number of lattice paths from $(0,0)$ to $(M,N)$ with filter restrictions $\mathcal{F}^1_{lk-1}$, $\mathcal{F}^1_{l(k+2)-1}$ and set of unrestricted elementary steps $\mathbb{S}$. Let
	\begin{equation*}
	Z^\prime_{(M,N)}\equiv Z(L_N((0,0)\to(M,N));\mathbb{S}\cup\mathbb{S}(k)\;|\;\mathcal{F}^1_{lk-1}, \mathcal{F}^1_{l(k+2)-1})
	\end{equation*}
	be the weighted number of lattice paths from $(0,0)$ to $(M,N)$ with the same restrictions, with steps $\mathbb{S}\cup\mathbb{S}(k)$.
	Then for $lk-1\leq M\leq l(k+2)-2$ we have
	\begin{equation}\label{eqq1}
	Z^\prime_{(M,N)}=Z_{(M,N)},\quad \text{if $N\leq M+ 2l-2$},
	\end{equation}
	\begin{equation}\label{eqq2}
	Z^\prime_{(M,N)}=Z_{(M,N)}+Z_{(M+2l,N)},\quad \text{if  $M+ 2l\leq N \leq l(k+4)-2$}.
	\end{equation}
\end{lemma}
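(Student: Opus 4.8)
The plan is to analyze how a single sequence of long steps $\mathbb{S}(k)$ affects the weighted path count, by classifying descending paths according to whether they use a long step and, if so, how many times. First I would observe that the source line of $\mathbb{S}(k)$ is $x = l(k+2)-2$ and the target line is $x = lk-1$, which is the filter line of $\mathcal{F}^1_{lk-1}$. Because all steps are descending (each increases $N$ by $1$) and a long step decreases $x$ by $2l-1$, any path from $(0,0)$ to a point $(M,N)$ with $lk-1 \leq M \leq l(k+2)-2$ can use \emph{at most one} long step: after using one, the path sits at $x = lk-1$ with a smaller $N$-budget, and to return to the strip $lk-1 \leq M \leq l(k+2)-2$ and use another long step it would need to climb back to $x = l(k+2)-2$, which I will argue the $N$-range $N \leq l(k+4)-2$ forbids. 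This is the crucial finiteness input and I expect it to be the main obstacle — one has to bound carefully how much $N$ is consumed between two consecutive long steps, using that between them the path must cross from $x=lk-1$ up to $x=l(k+2)-2$ (at least $2l-1$ steps) and that the filter $\mathcal{F}^1_{lk-1}$ constrains re-entry.

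Next I would set up the bijective decomposition. A path in $L_N((0,0)\to(M,N);\mathbb{S}\cup\mathbb{S}(k)\mid \cdots)$ either (i) uses no long step, contributing exactly $Z_{(M,N)}$, or (ii) uses exactly one long step, say the $m$-th one, $(l(k+2)-2,\,lk-2+2m)\to(lk-1,\,lk-1+2m)$. For case (ii) I would split the path at that long step into a prefix from $(0,0)$ to $(l(k+2)-2, lk-2+2m)$ using only ordinary steps $\mathbb{S}$ (subject to the two filters), and a suffix from $(lk-1, lk-1+2m)$ to $(M,N)$ using steps in $\mathbb{S}\cup\mathbb{S}(k)$; by the at-most-one argument the suffix in fact uses only $\mathbb{S}$. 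Summing over all admissible $m$ and recognizing the resulting convolution, one wants to identify $\sum_m (\text{prefix count to }(l(k+2)-2,\cdot))\cdot(\text{suffix count from }(lk-1,\cdot)\text{ to }(M,N))$ with $Z_{(M+2l,N)}$. The key point here is a translation/reflection argument: a path to $(M+2l,N)$ that reaches the strip $l(k+1)-1 \le x \le l(k+3)-2$ can be matched, via the translation $T:(x,y)\mapsto(x-2l,y)$ on the relevant region and using the congruence machinery of Theorem~\ref{BD_cor} (the filter $\mathcal{F}^1_{l(k+2)-1}$ for the $(M+2l)$-problem plays the role of $\mathcal{F}^1_{lk-1}$ for the $M$-problem), to a path that enters the strip $lk-1\le x\le l(k+2)-2$ exactly through a long step of $\mathbb{S}(k)$.

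For the ranges: in the regime $N \leq M + 2l - 2$, I would show case (ii) is empty — there is simply not enough vertical room to reach the source line $x = l(k+2)-2$ (which has height at least $l(k+2)-2 \geq M$, forcing a long detour that overshoots $N$), so $Z'_{(M,N)} = Z_{(M,N)}$, giving \eqref{eqq1}. In the regime $M + 2l \leq N \leq l(k+4)-2$, the upper bound $N \le l(k+4)-2$ is exactly what guarantees at most one long step (a second long step would require $N \geq l(k+4)-2 + (\text{something positive})$), while the lower bound $N \geq M+2l$ ensures the convolution sum is nonempty and, after the congruence identification, reconstructs the full count $Z_{(M+2l,N)}$; this yields \eqref{eqq2}. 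I would phrase the final identification cleanly by invoking Corollary~\ref{BD_cor_cor} or Theorem~\ref{BD_cor} applied to the strip region, so that it suffices to check the two displayed identities on boundary points — on the left filter $\mathcal{F}^1_{lk-1}$ and on the horizontal boundaries $N = M+2l-2$ and $N = l(k+4)-2$ — where both sides reduce to the closed forms from Lemma~\ref{counting_lemma} and a direct computation closes the argument.
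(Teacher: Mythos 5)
Your proposal is essentially correct but takes a genuinely different route from the paper. The paper never dissects individual paths: it treats (\ref{eqq2}) as an identity of weighted counts on the triangular region II, invokes Corollary \ref{BD_cor_cor} to reduce it to the boundary of that region (the left cathetus $x=lk-1$ together with the hypotenuse $N=M+2l$), and proves it there by induction on $N$, manipulating the one-step recursion explicitly at the filter line (the hypotenuse being handled via $Z_{(j,j)}=Z_{(k,k)}$). You instead classify paths by the number of long steps used: the height bookkeeping (a long-step target has height at least $l(k+2)-1$, so a second long step would force height at least $l(k+4)-1>N$) shows at most one long step occurs and simultaneously gives (\ref{eqq1}); then splitting a one-long-step path at its long step, and splitting a path to $(M+2l,N)$ at its unique, irreversible crossing of $\mathcal{F}^1_{l(k+2)-1}$, produces two convolutions with identical prefixes, whose suffixes are matched by the translation by $2l$. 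Your argument is more bijective and explains structurally where the term $Z_{(M+2l,N)}$ comes from; the paper's boundary-plus-induction argument is more mechanical but is exactly the template that is reused in Corollary \ref{lstep_property_cor} and iterated in Lemma \ref{l1}.

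Two points in your sketch need care. First, the suffix matching is a congruence only because the tail of the $(M+2l,N)$-path cannot reach $x=l(k+4)-1$: its length is at most $N-(l(k+2)-1)\le 2l-1$, one short of the $2l$ steps required. Without this, the translated strip (which has no filter at $l(k+4)-1$) would admit paths that the strip $lk-1\le x\le l(k+2)-2$ (walled on the right by the trapping filter $\mathcal{F}^1_{l(k+2)-1}$) does not; this is precisely why (\ref{eqq2}) fails for larger $N$, as remarked after the lemma, so the bound $N\le l(k+4)-2$ does double duty and is not only about forbidding a second long step. (Relatedly, the strip you name, $l(k+1)-1\le x\le l(k+3)-2$, should be $l(k+2)-1\le x\le l(k+4)-2$.) Second, your closing suggestion misidentifies the boundary: for region II it is the filter line $x=lk-1$ and the hypotenuse $N=M+2l$, not the lines $N=M+2l-2$ and $N=l(k+4)-2$, and on that boundary $Z^\prime$ is not given by Lemma \ref{counting_lemma}, so the ``direct computation'' there is really the recursion-induction the paper performs. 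These are repairable, and your core decomposition argument stands on its own without the final boundary-reduction step.
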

\begin{proof}
	In Figure \ref{l88}, we depict the setting of the Lemma \ref{lstep_property}. Long steps do not impact region I, so Formula (\ref{eqq1}) is true.
	\begin{figure}[h!]
		{\includegraphics[width=300pt]{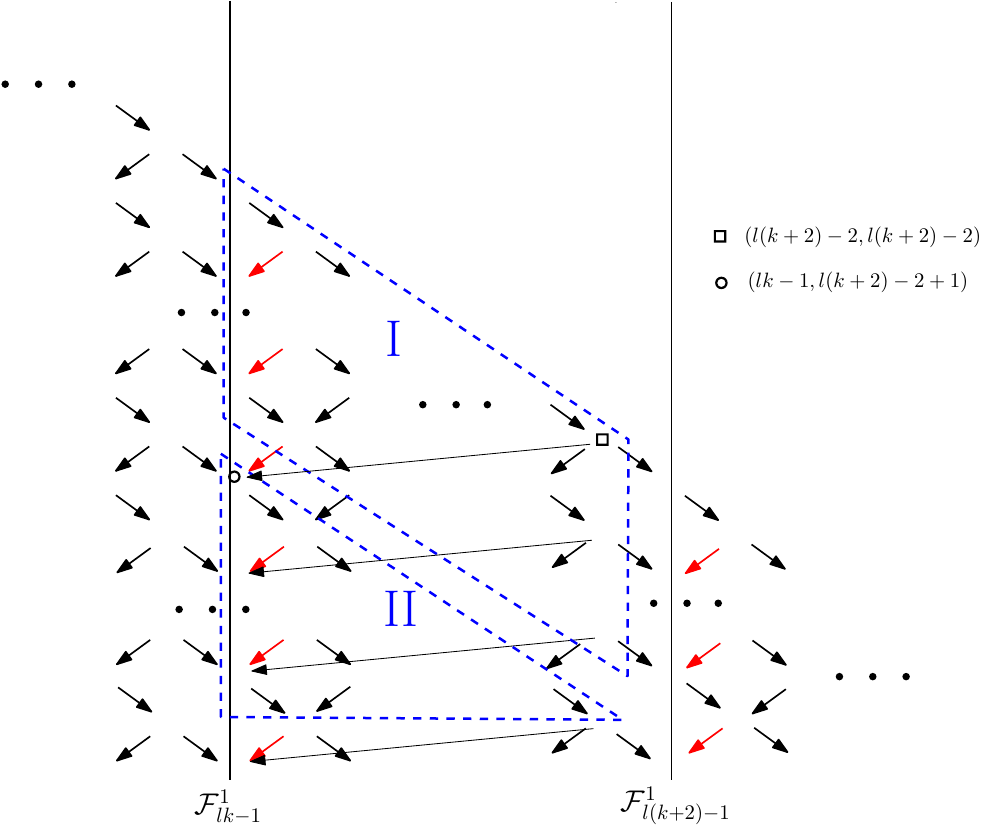}}
		\caption{By square and circle we denote points, where long steps first appear. Regions I and II highlighted with blue dashed lines correspond to cases $N\leq M+ 2l-2$, as in (\ref{eqq1}), and $M+ 2l\leq N \leq l(k+4)-2$, as in (\ref{eqq2}).}
		\label{l88}
	\end{figure}

	Consider Formula (\ref{eqq2}). The weighted number of paths in the l.h.s. involves points from region II. Its boundary contains points of the left cathetus of region II, of the form $(lk-1,N)$ for $l(k+2)-1\leq N\leq l(k+4)-2$, and points of the hypotenuse of the region II, of the form $(lk-1+j,l(k+2)-1+j)$ for $j=1,\ldots,2l-1$. Denote this set by $\partial\mathcal{L}_{II}$. The r.h.s. of (\ref{eqq2}) has two terms. The first involves region II, the boundary of which we have already considered. The second term involves points of the region congruent to region II, as they are related by translation $T(M,N)=(M+2l,N)$, satisfying Definition \ref{defcong}. Its boundary consists of the image of the left cathetus of region II under translation $T$. Denote this set by $\partial\mathcal{L}^\prime_{II}$. By Corollary \ref{BD_cor_cor}, it is sufficient to prove Formula (\ref{eqq2}) for $\partial\mathcal{L}_{II}\cup T^{-1}(\partial\mathcal{L}^\prime_{II})=\partial\mathcal{L}_{II}$.

	We proceed by induction over $n$, where $N=l(k+2)-1+2n$. For $n=0$ from recursion we have
	\begin{equation}
	Z^\prime_{(lk-1,l(k+2)-1)}=Z_{(lk-2,l(k+2)-2)}+2Z_{(lk,l(k+2)-2)}+Z_{(l(k+2)-2,l(k+2)-2)},
	\end{equation}
	which, taking into account that
	\begin{equation*}
	Z_{(lk-2,l(k+2)-2)}+2Z_{(lk,l(k+2)-2)}=Z_{(lk-1,l(k+2)-1)},
	\end{equation*}
	\begin{equation*}
	Z_{(l(k+2)-2,l(k+2)-2)}=Z_{(l(k+2)-1,l(k+2)-1)},
	\end{equation*}
	gives us
	\begin{equation}
	Z^\prime_{(lk-1,l(k+2)-1)}=Z_{(lk-1,l(k+2)-1)}+Z_{(l(k+2)-1,l(k+2)-1)}.
	\end{equation}
	{We} obtained the base of induction.

	In a similar manner, it also follows, that Formula (\ref{eqq2}) is true for boundary points of the hypotenuse of region II. In order to show this, one must consider recursion explicitly and use the fact that
	\begin{equation}
	Z_{(j,j)}=Z_{(k,k)},\quad\text{for all $j,k>0$}.
	\end{equation}
	{Now} it is sufficient to prove Formula (\ref{eqq2}) for boundary points, situated in the left cathetus of region II.
	
	Suppose
	\begin{equation}
	Z^\prime_{(lk-1,l(k+2)-1+2n)}=Z_{(lk-1,l(k+2)-1+2n)}+Z_{(l(k+2)-1,l(k+2)-1+2n)}
	\end{equation}
	is true. For the sake of brevity, we rewrite this expression as
	\begin{equation}
	Z^\prime_{(p,q+2n)}=Z_{(p,q+2n)}+Z_{(q,q+2n)},
	\end{equation}
	where $p=lk-1$, $q=l(k+2)-1$, $q=p+2l$. 
	By Theorem \ref{BD_cor}, it follows that Formula (\ref{eqq2}) is true for the region, corresponding to boundary points, covered by the inductive supposition. In particular, this region includes points $(p+j,q+2n+j)$ for $j=0,\ldots,2l-1$. Need to prove that
	\begin{equation}
	Z^\prime_{(p,q+2(n+1))}=Z_{(p,q+2(n+1))}+Z_{(q,q+2(n+1))}
	\end{equation}
	{Taking} into account, that
	\begin{equation*}
	Z^\prime_{(p,q+2n+2)}=Z_{(p-1,q+2n+1)}+2Z^\prime_{(p+1,q+2n+1)}+Z_{(q-1,q+2n+1)},
	\end{equation*}
	\begin{equation*}
	Z_{(p,q+2n+2)}=Z_{(p-1,q+2n+1)}+2Z_{(p+1,q+2n+1)},
	\end{equation*}
	\begin{equation*}
	Z_{(q,q+2n+2)}=Z_{(q-1,q+2n+1)}+2Z_{(q+1,q+2n+1)},
	\end{equation*}
	after getting rid of the factors, we obtain
	\begin{equation}
	Z^\prime_{(p+1,q+2n+1)}=Z_{(p+1,q+2n+1)}+Z_{(q+1,q+2n+1)}.
	\end{equation}
	{However,} this is true from the inductive supposition.
\end{proof}
Note that Formula (\ref{eqq2}) is not true for greater values of $N$. Region II indeed can be made into a parallelogram, similar to region I, since the set of boundary points will remain the same. However, the region corresponding to this parallelogram being translated by $T$ contains new boundary points, where (\ref{eqq2}) does not hold and Corollary \ref{BD_cor_cor} cannot be used further, even though these regions are congruent to each other. The formula for greater values of $N$ needs to include some new terms. In this parallelogram-like region, we need to take into account the reflection of paths, induced by the term $Z_{(M+2l,N)}$ in $Z^\prime_{(M,N)}$, from the filter restriction $\mathcal{F}^1_{l(k+2)-1}$. This is achieved by means of the first part of Lemma \ref{counting_lemma}. Now consider the triangular region, which, similarly to region II being below region I, is below the parallelogram-like region considered previously. There, we need to take into account long steps, acting on paths induced by the term $Z_{(M+2l,N)}$, which have descended to $(l(k+2)-2,N)$ and were acted upon by long steps for the second time. This is being conducted in a similar fashion to Corollary \ref{BD_cor_cor}, where $Z_{(M+2l,N)}$ is assumed to be known from the second part of Lemma \ref{counting_lemma}. This situation for the case of the auxiliary lattice path model in the presence of long steps will be elaborated upon later.

\begin{corollary}\label{lstep_property_cor}
	Fix $j,k\in\mathbb{N}$, $j\leq k$. Let
	\begin{equation*}
	Z_{(M,N)}\equiv Z(L_N((0,0)\to(M,N));\mathbb{S}\;|\;\mathcal{W}^L_0,\{\mathcal{F}^1_{nl-1}\}_{n=j}^{\infty})
	\end{equation*}
	be the weighted number of lattice paths from $(0,0)$ to $(M,N)$ with filter restrictions $\{\mathcal{F}^1_{nl-1}\}_{n=j}^{\infty}$ and set of unrestricted elementary steps $\mathbb{S}$. Let
	\begin{equation*}
	Z^\prime_{(M,N)}\equiv Z(L_N((0,0)\to(M,N));\mathbb{S}\cup\mathbb{S}(k)\;|\;\mathcal{W}^L_0,\{\mathcal{F}^1_{nl-1}\}_{n=j}^{\infty})
	\end{equation*}
	be the weighted number of lattice paths from $(0,0)$ to $(M,N)$ with the same restrictions, with steps $\mathbb{S}\cup\mathbb{S}(k)$.
	Then, for $lk-1\leq M\leq l(k+2)-2$ we have
	\begin{equation}
	Z^\prime_{(M,N)}=Z_{(M,N)},\quad \text{if $N\leq M+ 2l-2$},
	\end{equation}
	\begin{equation}\label{eqq3}
	Z^\prime_{(M,N)}=Z_{(M,N)}+Z_{(M+2l,N)},\quad \text{if  $M+ 2l\leq N \leq l(k+4)-2$}.
	\end{equation}
\end{corollary}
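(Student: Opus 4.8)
The plan is to run the same two-region argument as in the proof of Lemma~\ref{lstep_property}, using the congruence machinery of Section~\ref{congandbound} to absorb the additional restrictions $\mathcal{W}^L_0$ and $\{\mathcal{F}^1_{nl-1}\}_{n=j}^{\infty}$ with $n\neq k,k+2$. As in Figure~\ref{l88}, write region I for the set of $(M,N)$ with $lk-1\le M\le l(k+2)-2$ and $N\le M+2l-2$, and region II for the set with $lk-1\le M\le l(k+2)-2$ and $M+2l\le N\le l(k+4)-2$.

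For region I, I would argue that no path counted by $Z^\prime_{(M,N)}$ which uses a step of $\mathbb{S}(k)$ can terminate there. Along any path the quantity $N-x$ is non-decreasing (unchanged by a right step, increased by $2$ by a left step); any long step of $\mathbb{S}(k)$ with a reachable source has source $(l(k+2)-2,N_0)$ with $N_0\ge l(k+2)-2$ and target $(lk-1,N_0+1)$, at which $N-x\ge 2l$; and at $x=lk-1$ the filter $\mathcal{F}^1_{lk-1}$ only allows a right step. Hence after its first long step a path remains in $\{N\ge x+2l\}$, which is disjoint from region I. Therefore $Z^\prime_{(M,N)}=Z_{(M,N)}$ on region I, which is the first asserted equality.

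For region II, I would invoke Corollary~\ref{BD_cor_cor} with $\mathbb{S}_1=\mathbb{S}\cup\mathbb{S}(k)$, $\mathbb{S}_2=\mathbb{S}_3=\mathbb{S}$, all three restriction sets equal to $\{\mathcal{W}^L_0,\{\mathcal{F}^1_{nl-1}\}_{n\ge j}\}$, $\mathcal{L}_1=\mathcal{L}_2=\mathcal{L}_{II}$, $T_1=\mathrm{id}$, $T_2(M,N)=(M+2l,N)$, and $\mathcal{L}_3=T_2(\mathcal{L}_{II})$. Two congruences must be checked. First, no long step of $\mathbb{S}(k)$ has both endpoints in $\mathcal{L}_{II}$: the only point of $\mathcal{L}_{II}$ in the source column $x=l(k+2)-2$ is the corner $(l(k+2)-2,l(k+4)-2)$, and its long step lands at $(lk-1,l(k+4)-1)\notin\mathcal{L}_{II}$; hence $\mathcal{L}_1$ is congruent to $\mathcal{L}_2$ via the identity. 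Second, $T_2$ carries the filter columns $\{lk-1,\,lk,\,l(k+1)-1,\,l(k+1)\}$ met by $\mathcal{L}_{II}$ bijectively onto the filter columns $\{l(k+2)-1,\,l(k+2),\,l(k+3)-1,\,l(k+3)\}$ met by $T_2(\mathcal{L}_{II})$, and neither region meets the wall at $x=0$; here the hypothesis $j\le k$ guarantees that all of these filters belong to the family $\{\mathcal{F}^1_{nl-1}\}_{n\ge j}$, so $\mathcal{L}_1$ is congruent to $\mathcal{L}_3$ via $T_2$. Corollary~\ref{BD_cor_cor} then reduces (\ref{eqq3}) on region II to (\ref{eqq3}) on $\partial\mathcal{L}_{II}$.

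Finally, the boundary verification on $\partial\mathcal{L}_{II}$ is a transcription of the corresponding part of the proof of Lemma~\ref{lstep_property}: the recursions at the left cathetus $x=lk-1$ involve only the filter $\mathcal{F}^1_{lk-1}$, which is present identically in all three models, and the hypotenuse computation uses only that $Z_{(j,j)}$ is independent of $j>0$. The latter still holds here, since the unique path from $(0,0)$ to a diagonal point $(j,j)$ uses only right steps and each right step across a type-$1$ filter has weight $1$, so $Z_{(j,j)}=1$ for all $j\ge 0$ even in the presence of the wall and the full filter family; neither $\mathcal{W}^L_0$ nor any $\mathcal{F}^1_{nl-1}$ with $n\neq k$ enters these local identities, so the induction on $n$ (with $N=l(k+2)-1+2n$) proceeds verbatim. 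I expect the only delicate point to be the bookkeeping in the two congruences above — the filter-column arithmetic under the shift by $2l$, and the verification that region II contains no internal long step; once these are in place the statement follows from Lemma~\ref{lstep_property} and Corollary~\ref{BD_cor_cor}.
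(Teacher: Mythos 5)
Your proposal is correct and takes essentially the same route as the paper, whose proof simply states that the argument of Lemma~\ref{lstep_property} carries over because region II is still congruent to its translate by $2l$, so Corollary~\ref{BD_cor_cor} still applies; this congruence is exactly what the periodicity of the filter family $\{\mathcal{F}^1_{nl-1}\}_{n\geq j}$ with $j\leq k$ guarantees. Your explicit checks (no internal long step in region II, the filter-column matching under the shift by $2l$, and $Z_{(j,j)}=1$ on the diagonal) are precisely the details the paper leaves implicit.
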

\begin{proof}
	The proof is the same, as for Lemma \ref{lstep_property}. When proving the inductive step, we still can apply Corollary \ref{BD_cor_cor} as region II is still congruent to the one, translated by $T$.
\end{proof}
Note, that Formula (\ref{eqq3}), unlike (\ref{eqq2}), is true for greater values of $N$, as making region II into a parallelogram-like region will not add new boundary points. The manifestation of this fact is that we do not need to take into account the reflection of paths, as they have already been dealt with in term $Z_{(M+2l,N)}$ due to the periodicity of filter restrictions. So, for such a region Formula (\ref{eqq3}) holds. However, for the triangular region below the same problem remains.

Consider the auxiliary lattice path model in the presence of the sequence of steps $\mathbb{S}(k)$.
\begin{definition}\label{multws}
	We denote by multiplicity function in the $j$-th strip $\tilde{M}^j_{(M,N)}$ the weighted number of paths in set 
	\begin{equation*}
	L_N((0,0)\to (M,N);\mathbb{S}\cup\tilde{\mathbb{S}}\;|\;\mathcal{W}^L_0,\{\mathcal{F}_{nl-1}^{1}\}, n\in \mathbb{N})
	\end{equation*}
	with the endpoint $(M,N)$ that lies within $(j-1)l-1 \leq M \leq jl-2$
	\begin{equation}
	\tilde{M}^j_{(M,N)} = Z(L_N((0,0)\to (M,N);\mathbb{S}\cup\tilde{\mathbb{S}}\;|\;\mathcal{W}^L_0,\{\mathcal{F}_{nl-1}^{1}\}, n\in \mathbb{N})),
	\end{equation}
	where $\tilde{\mathbb{S}}$ is a set of some additional steps and $M\geq 0$ and $j=\Big[\frac{M+1}{l}+1\Big]$. 
\end{definition} 
In this subsection, $\tilde{\mathbb{S}}=\mathbb{S}(k)$ if not stated otherwise.
\begin{lemma}\label{l1} For fixed $k\in\mathbb{N}$
	\begin{equation}\label{k1}
	\tilde{M}^{k+1}_{(M,N)}=\sum_{\substack{j=0}}^{[\frac{N-lk+1}{2l}]} M^{k+1+2j}_{(M+2jl,N)},
	\end{equation}
	\begin{equation}\label{k3}
	\tilde{M}^{k+3}_{(M,N)}=\sum_{\substack{j=0}}^{[\frac{N-l(k+2)+1}{2l}]} M^{k+3+2j}_{(M+2jl,N)},
	\end{equation}
	where $\tilde{M}^{j}_{(M,N)}$ is the multiplicity function for $j$-th strip in the auxiliary model with steps $\mathbb{S}\cup\tilde{\mathbb{S}}$, $M^{j}_{(M,N)}$ is the multiplicity function for $j$-th strip in the auxiliary model with steps $\mathbb{S}$.
\end{lemma}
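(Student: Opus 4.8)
\textbf{Proof proposal for Lemma \ref{l1}.}

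The plan is to prove both identities (\ref{k1}) and (\ref{k3}) by iterating Corollary \ref{lstep_property_cor}. The key observation is that the single sequence of long steps $\mathbb{S}(k)$ only affects strips to the right of $x=lk-1$, and within the relevant range of $N$ it acts exactly as described by (\ref{eqq3}): passing from the model with steps $\mathbb{S}$ to the model with steps $\mathbb{S}\cup\mathbb{S}(k)$ replaces the weighted number of paths ending in a point $(M,N)$ with $lk-1\le M\le l(k+2)-2$ by the sum of that quantity and the weighted number of paths ending in $(M+2l,N)$, i.e.\ $\tilde M^{k+1}_{(M,N)}=M^{k+1}_{(M,N)}+M^{k+3}_{(M+2l,N)}$ for $N$ in the first window. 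So the first step is to set $j=1$ (using the notation of Corollary \ref{lstep_property_cor}, with left wall $\mathcal{W}^L_0$ and all filters $\{\mathcal{F}^1_{nl-1}\}_{n\in\mathbb{N}}$) and record that base case.

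The second step is to propagate this through the triangular region below, which Corollary \ref{lstep_property_cor} does not yet cover. Here I would argue exactly as in the discussion following Corollary \ref{lstep_property_cor}: for $N\geq l(k+4)-1$ the term $M^{k+3}_{(M+2l,N)}$ already encodes the reflection of the "displaced" paths off the filter $\mathcal{F}^1_{l(k+2)-1}$ (because of periodicity of the filter restrictions, no new boundary points appear when region II is completed to a parallelogram-like region), but those displaced paths, once they descend to $x=l(k+2)-2$, get hit by $\mathbb{S}(k)$ a second time and are carried to $x=l(k+4)-1$, contributing $M^{k+5}_{(M+4l,N)}$, and so on. Formally I would set up an induction on the number $j$ of times the sequence $\mathbb{S}(k)$ can act on a path ending at $(M,N)$, which is governed by the inequality $l(k+2j)-1\le N$, equivalently $j\le[\frac{N-lk+1}{2l}]$; at each stage Corollary \ref{lstep_property_cor} (applied in the appropriate congruent strip, with $Z_{(M+2jl,N)}$ treated as known exactly as $Z_{(M+2l,N)}$ is treated there) supplies the next summand $M^{k+1+2j}_{(M+2jl,N)}$ and terminates the recursion once $(M+2jl,N)$ falls outside the reach of any further long step. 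This yields (\ref{k1}); the identity (\ref{k3}) for the $(k+3)$-rd strip is the same computation shifted by $2l$ — a path ending with $lk-1\le M\le l(k+2)-2$ that has been displaced once now sits in the $(k+3)$-rd strip, and the bound becomes $j\le[\frac{N-l(k+2)+1}{2l}]$.

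The third step is bookkeeping: checking that the ranges of summation coincide with the stated floor functions and that the strip index $k+1+2j$ (resp.\ $k+3+2j$) is the correct one for the endpoint $(M+2jl,N)$, which follows from $j=[\frac{M+1}{l}+1]$ in Definition \ref{multws} since shifting $M$ by $2jl$ shifts the strip index by $2j$. The main obstacle, I expect, is the second step: making precise the claim in the paragraph after Corollary \ref{lstep_property_cor} that the repeated action of $\mathbb{S}(k)$ in the sub-triangular regions is again captured by a congruence-and-boundary argument. This requires carefully identifying, at the $j$-th stage, the relevant region, verifying it is congruent (via $T(M,N)=(M+2l,N)$) to the corresponding region one strip over, checking that its boundary points are precisely those already handled by the inductive hypothesis together with the filter/wall boundary, and confirming that $N\le l(k+4)-2$ is \emph{not} in fact needed here because — unlike in Lemma \ref{lstep_property} — the periodic family of filters means completing region II to a parallelogram introduces no uncontrolled boundary points, exactly the point emphasized in the remark after Corollary \ref{lstep_property_cor}. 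Once that is set up cleanly, applying Theorem \ref{BD_cor} / Corollary \ref{BD_cor_cor} at each stage is routine.
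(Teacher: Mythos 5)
Your overall strategy coincides with the paper's: base case from Corollary \ref{lstep_property_cor} (with the full periodic family of filters, so that completing region II to a parallelogram-like region adds no new boundary points), then an induction in which each further $2l$-window of $N$ contributes one more auxiliary-model summand, justified by congruence under $T(M,N)=(M+2l,N)$ and by reduction to boundary points via Theorem \ref{BD_cor} and Corollary \ref{BD_cor_cor}. Two places, however, need repair before this is a proof. The main one is your dismissal of (\ref{k3}) as ``the same computation shifted by $2l$''. The model contains only the single sequence $\mathbb{S}(k)$, with fixed source $x=l(k+2)-2$ and target $x=lk-1$; there is no $\mathbb{S}(k+2)$, so the strip-$(k+3)$ identity is not a translate of the strip-$(k+1)$ one and requires its own argument. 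In the paper the two identities are proved by a simultaneous induction over the $2l$-windows of $N$: the term newly added to $\tilde{M}^{k+1}_{(M,N)}$ at stage $n$ induces paths which descend across the filter $\mathcal{F}^1_{l(k+2)-1}$, and by periodicity of the filters (congruence plus Theorem \ref{BD_cor}) their arrival at the boundary of the $(k+3)$-rd strip yields the next term of (\ref{k3}); only then is Corollary \ref{lstep_property_cor} applied to this displaced population (realized as an auxiliary model with shifted origin, shifted wall and shifted filters) to produce the next term of (\ref{k1}). Your phrase ``with $Z_{(M+2jl,N)}$ treated as known'' silently assumes exactly this interlocking step, so (\ref{k3}) cannot be postponed or inferred by shifting: it is an ingredient of the induction for (\ref{k1}), and conversely.

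The second, smaller slip: the second application of $\mathbb{S}(k)$ carries paths again to $x=lk-1$, not to $x=l(k+4)-1$; it is only the auxiliary-model image of these twice-displaced paths under the congruence translation that sits near $x=l(k+4)-1$, and it is that image which accounts for the summand $M^{k+5}_{(M+4l,N)}$. Once these two points are made precise (and the hypotenuse boundary points are checked as in the proof of Lemma \ref{lstep_property}), your argument becomes essentially the paper's proof.
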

\begin{proof}
	We proceed by induction over $n$, where $n=[\frac{N-l(k+2)+2}{2l}]$, first proving (\ref{k3}), then (\ref{k1}). For $n=0$, Formula (\ref{k3}) follows immediately from the Theorem \ref{BD_cor}, as long steps do not impact this region. Formula (\ref{k1}) follows from Corollary \ref{lstep_property_cor}. As was discussed, Formula (\ref{eqq3}) is true for greater values of $N$, mainly, it is true for a parallelogram-like region, satisfying $n=0$. So, we obtained the base of induction.
	
	Suppose, that
	\begin{equation}\label{ind1}
	\tilde{M}^{k+1}_{(M,N)}=\sum_{\substack{j=0}}^{n+1} M^{k+1+2j}_{(M+2jl,N)}
	\end{equation}
	\begin{equation}\label{ind3}
	\tilde{M}^{k+3}_{(M,N)}=\sum_{\substack{j=0}}^{n} M^{k+3+2j}_{(M+2jl,N)}
	\end{equation}
	is true.
	
	Need to prove the inductive step for (\ref{ind3}) first, thus we need to prove (\ref{k3}) for $M+2ln\leq N\leq M+2l(n+1)$, where $l(k+2)-1\leq M\leq l(k+3)-2$. Denote this region as $\mathcal{L}_1$. The l.h.s. of (\ref{k3}) is a weighted number of paths, $\partial\mathcal{L}_1$ consists of points $(l(k+2)-1,N)$ for $l(k+2)-1+2ln\leq N\leq l(k+2)-1+2l(n+2)$ and $(l(k+2)-1+j,l(k+2)-1+2ln+j)$ for $j=0,\ldots,l-1$. We divide the r.h.s. of (\ref{k3}) into two terms. The first corresponds to the sum given by inductive supposition in (\ref{ind3}). It is also a weighted number of paths defined for $\mathcal{L}_2=\mathcal{L}_1$ with the same boundary points $\partial\mathcal{L}_2=\partial\mathcal{L}_1$, as the l.h.s. of (\ref{k3}). These two regions are congruent, $T_1=id$. The second is an additional term, which we expect to appear during an inductive step. It is given by
	\begin{equation*}
	M^{k+3+2(n+1)}_{(M+2(n+1)l,N)}= Z(L_N((0,0)\to(M+2(n+1)l,N));\mathbb{S}\;|\;\mathcal{W}^L_0,\{\mathcal{F}^1_{ml-1}\}_{m=1}^{\infty})
	\end{equation*}
	for region $l(k+2+2(n+1))-1\leq M\leq l(k+3+2(n+1))-2$ and $M\leq N\leq M+2l$. Denote it by $\mathcal{L}_3$. Its boundary $\partial\mathcal{L}_3$ consists of points $(l(k+2+2(n+1))-1,N)$ for $l(k+2+2(n+1))-1\leq N\leq l(k+2+2(n+1))-1+2l$. This region is an image of $\mathcal{L}_1$ under translation $T_2(M,N)=(M+2l(n+1),N)$, they are congruent. By Corollary \ref{BD_cor_cor}, it is sufficient to prove inductive step at points $(l(k+2)-1,N)$ for $l(k+2)-1+2ln\leq N\leq l(k+2)-1+2l(n+2)$ and points $(l(k+2)-1+j,l(k+2)-1+2ln+j)$ for $j=0,\ldots,l-1$. These are drawn in Figure \ref{proofpic}.
	
	Consider points of a form $(l(k+2)-1,N)$. At $n$-th iteration we added $M^{k+1+2(n+1)}_{(M+2(n+1)l,N)}$ to $\tilde{M}^{k+1}_{(M,N)}$. This term induces paths, which further descend from $(k+1)$-th strip to boundary points of $(k+3)$-th strip. The region in which induced paths descend is congruent to the region, where paths corresponding to $M^{k+1+2(n+1)}_{(M+2(n+1)l,N)}$ continue to descend to the boundary of $(k+3+2(n+1))$-th strip in the auxiliary lattice path model. This is due to the periodicity of filter restrictions. Here, we can apply Theorem \ref{BD_cor} to conclude that the weighted number of induced paths arriving at the boundary of $(k+3)$-th strip is equal to $M^{k+3+2(n+1)}_{(2(n+1)l,N)}$.
	
	Consider points of a form $(l(k+2)-1+j,l(k+2)-1+2ln+j)$. For such points, the proof is the same as for the Formula (\ref{eqq2}) for the hypotenuse of region II.
	
	Now that we proved the inductive step for the boundary of the considered region, by Corollary \ref{BD_cor_cor}, it follows that 
	\begin{equation}
	\tilde{M}^{k+3}_{(M,N)}=\sum_{\substack{j=0}}^{n} M^{k+3+2j}_{(M+2jl,N)}+M^{k+3+2(n+1)}_{(M+2(n+1)l,N)}=\sum_{\substack{j=0}}^{n+1} M^{k+3+2j}_{(M+2jl,N)},
	\end{equation}
	is true for the whole region, which proves the inductive step for Formula (\ref{ind3}).
	\begin{figure}[h!]
		{\includegraphics[width=250pt]{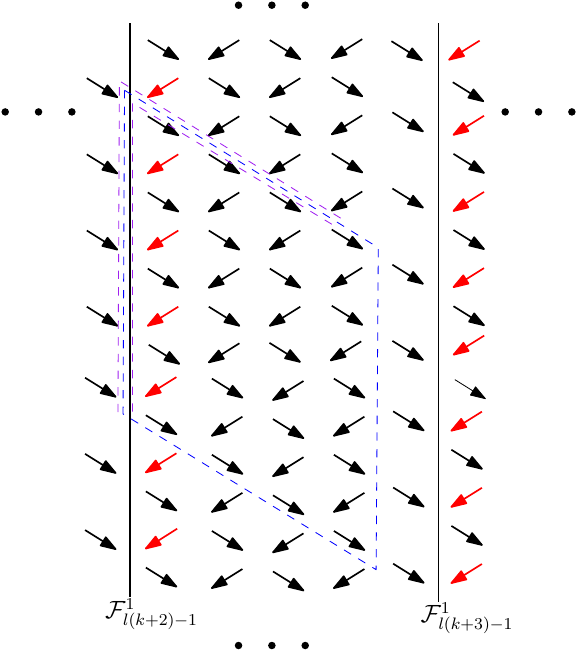}}
		\caption{Region $\mathcal{L}_1$, for which it is sufficient to prove (\ref{k3}) consists of points $M+2ln\leq N\leq M+2l(n+1)$ where $l(k+2)-1\leq M\leq l(k+3)-2$. It is highlighted with blue dashed lines. Union of boundaries for all terms of the considered expression $\partial\mathcal{L}_1\cup T_1^{-1}(\partial\mathcal{L}_2)\cup T_2^{-1}(\partial\mathcal{L}_3)$ consists of points $(l(k+2)-1,N)$ for $l(k+2)-1+2ln\leq N\leq l(k+2)-1+2l(n+2)$ and points $(l(k+2)-1+j,l(k+2)-1+2ln+j)$ for $j=0,\ldots,l-1$. It is highlighted with purple dashed lines. Here, we depict the case, where $l=5$.}
		\label{proofpic}
	\end{figure}
	
	This process for the first iterations is shown in Figure \ref{inducedpath}.

	\begin{figure}[h!]
		{\includegraphics[width=340pt]{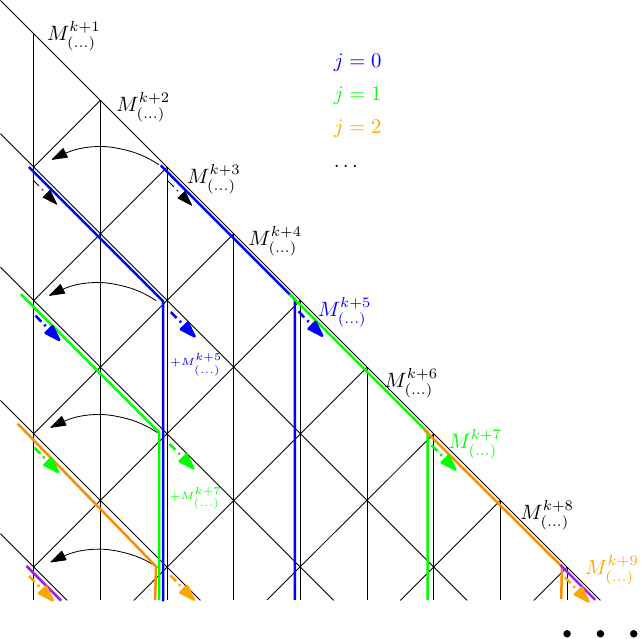}}
		\caption{Color emphasizes the number of iterations in the induction. Paths induced at the boundary of $(k+1)$-th strip during $(j-1)$-th iteration descend in the region, highlighted with color, corresponding to $j$-th iteration. Colored lines outline regions congruent to each other. Dashed colored arrows denote weighted numbers of induced paths, inflicted to $(k+3)$-th strip once they have descended, and their equivalents in strips of the auxiliary lattice path model.}
		\label{inducedpath}
	\end{figure}
	
	This figure also shows how long steps act on descended paths corresponding to dashed colored arrows, inducing paths at boundary points of $(k+1)$-th strip, highlighted with the dashed arrow of the same color. These induced paths, in turn, descend in the region, highlighted with a color corresponding to the next, $(j+1)$-th iteration. Proving that long steps induce paths at boundary points of $(k+1)$-th strip following this scenario amounts to proving the inductive step for Formula (\ref{ind1}).

	Now we need to prove the inductive step for Formula (\ref{ind1}), which amounts to proving (\ref{k1}) for $lk-1\leq M\leq l(k+1)-2$ and $M+2ln\leq N\leq M+2l(n+1)$. It is being conducted in a fashion similar to the proof of (\ref{k3}). Again, we divide the r.h.s. of (\ref{k1}) in two terms. The first one corresponds to the sum given by inductive supposition in (\ref{ind1}). The second one is an additional term, which we expect to appear during an inductive step. It is given by
	\begin{equation*}
	M^{k+1+2(n+1)}_{(M+2(n+1)l,N)}= Z(L_N((0,0)\to(M+2(n+1)l,N));\mathbb{S}\;|\;\mathcal{W}^L_0,\{\mathcal{F}^1_{ml-1}\}_{m=1}^{\infty})
	\end{equation*}
	for region $l(k+2(n+1))-1\leq M\leq l(k+1+2(n+1))-2$ and $M\leq N\leq M+2l$. By Corollary \ref{BD_cor_cor}, it is sufficient to prove inductive step at points $(lk-1,N)$ for $lk-1+2ln\leq N\leq l(k+1)-1+2l(n+2)$ and points $(lk-1+j,lk-1+2ln+j)$ for $j=0,\ldots,l-1$. It is shown in Figure \ref{proofpic1}. This region is the same, as depicted in Figure \ref{proofpic}, but translated by $T(M,N)=(M-2l,N)$.
	
	\begin{figure}[h!]
		{\includegraphics[width=250pt]{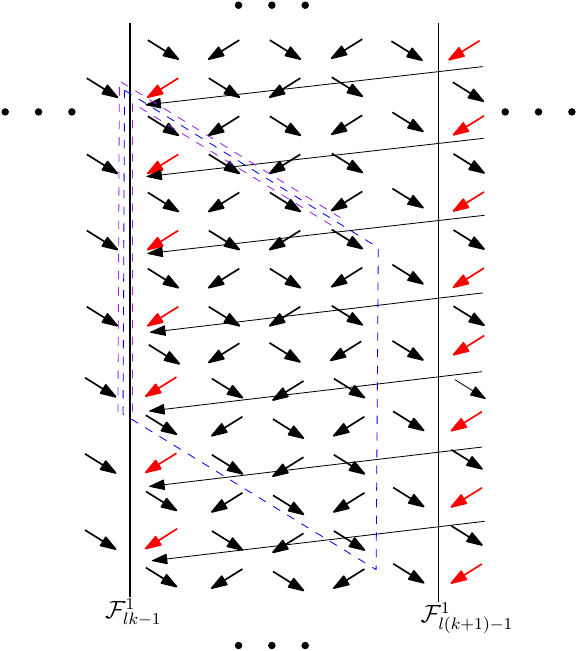}}
		\caption{Points $M+2ln\leq N\leq M+2l(n+1)$ where $l(k+2)-1\leq M\leq l(k+3)-2$ are highlighted with blue dashed lines. Union of boundaries for all terms of the considered expression consists of points $(l(k+2)-1,N)$ for $l(k+2)-1+2ln\leq N\leq l(k+2)-1+2l(n+2)$ and points $(l(k+2)-1+j,l(k+2)-1+2ln+j)$ for $j=0,\ldots,l-1$. They are highlighted with purple dashed lines. Here, we depict the case where $l=5$.}
		\label{proofpic1}
	\end{figure}
	Consider points of a form $(lk-1,N)$. Above, we have seen that Formula (\ref{k3}) receives term $M^{k+3+2(n+1)}_{(M+2(n+1)l,N)}$ during the inductive step. By inductive supposition (\ref{ind1}), it is left to account for the action of long steps, acting on paths, induced by this term. Denote the weighted number of paths, corresponding to this term as
	\begin{eqnarray*}
		Z_{(M,N)}\equiv Z(L_N((-2l(n+1),0)\to(M,N));\mathbb{S}\;|\;\mathcal{W}^L_{-2l(n+1)},\{\mathcal{F}^1_{ml-1-2l(n+1)}\}_{m=1}^{\infty})=\\
		=M^{k+3+2(n+1)}_{(M+2(n+1)l,N)},
	\end{eqnarray*}
	where $lk-1\leq M\leq l(k+2)-1$, $M+2ln\leq N\leq M+2l(n+1)$. Now, we want to calculate
	\begin{equation*}
	Z^\prime_{(M,N)}\equiv Z(L_N((-2l(n+1),0)\to(M,N));\mathbb{S}\cup\tilde{\mathbb{S}}\;|\;\mathcal{W}^L_{-2l(n+1)},\{\mathcal{F}^1_{ml-1-2l(n+1)}\}_{m=1}^{\infty}).
	\end{equation*}
	{From} Corollary \ref{lstep_property_cor}, it is given by
	\begin{equation*}
	Z^\prime_{(M,N)}=Z_{(M,N)}+Z_{(M+2l,N)},
	\end{equation*}
	where
	\begin{equation*}
	Z_{(M+2l,N)}=M^{k+3+2(n+2)}_{(M+2(n+2)l,N)}.
	\end{equation*}
	
	Consider points of a form $(lk-1+j,lk-1+2ln+j)$. For such points, the proof is the same as for Formula (\ref{eqq2}) for the hypotenuse of region II.
	
	Now that we have proven the inductive step for the boundary of the considered region, by Corollary \ref{BD_cor_cor}, it follows that 
	\begin{equation}
	\tilde{M}^{k+1}_{(M,N)}=\sum_{\substack{j=0}}^{n+1} M^{k+1+2j}_{(M+2jl,N)}+M^{k+1+2(n+2)}_{(M+2(n+2)l,N)}=\sum_{\substack{j=0}}^{n+1} M^{k+1+2j}_{(M+2jl,N)},
	\end{equation}
	is true for the whole region, which proves the inductive step for Formula (\ref{ind1}).
\end{proof}
\begin{corollary}\label{l2}
	For fixed $k\in\mathbb{N}$ and $m\geq k$
	\begin{equation}
	\tilde{M}^{m+1}_{(M,N)}=\sum_{\substack{j=0}}^{[\frac{N-lm+1}{2l}]} M^{m+1+2j}_{(M+2jl,N)},
	\end{equation}
\end{corollary}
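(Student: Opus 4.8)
The plan is to prove the identity for all $m\ge k$ at once by a weight-preserving bijection that ``unfolds'' the long steps of $\mathbb{S}(k)$, rather than by iterating the congruence argument of Section~\ref{congandbound}. The key structural input is a one-way property of the filters: since $\mathcal{F}^1_{nl-1}$ forbids every left step at the column $x=nl-1$, no path of either model can move from the $(n+1)$-th strip back down into the $n$-th strip; in particular, right after a long step of $\mathbb{S}(k)$ the filter $\mathcal{F}^1_{lk-1}$ confines the path to strips of index $\ge k+1$, and once a path has entered the $(k+3)$-th strip it stays in strips of index $\ge k+3$ forever. Hence the only move that lowers the strip index is a long step, which takes a path from the top column $x=l(k+2)-2$ of the $(k+2)$-th strip to the bottom column $x=lk-1$ of the $(k+1)$-th strip, and I would sort the paths counted by $\tilde M^{m+1}_{(M,N)}$ according to the number $j\ge 0$ of long steps they use.

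Given a path $\mathcal{P}$ using long steps at heights $h_1<\dots<h_j$, cut it at those long steps into segments $S_0,\dots,S_j$, each using only elementary steps of $\mathbb{S}$; by the one-way property $S_0$ runs from $(0,0)$ to $(l(k+2)-2,h_1)$ staying in strips of index $\le k+2$, each intermediate $S_i$ runs from $(lk-1,h_i+1)$ to $(l(k+2)-2,h_{i+1})$ staying in the strips of index $k+1,k+2$, and $S_j$ runs from $(lk-1,h_j+1)$ to $(M,N)$ staying in strips of index between $k+1$ and $m+1$. For each $i$ apply the translation $(x,y)\mapsto(x+2il,y)$ to $S_i$, and between the images of $S_{i-1}$ and $S_i$ insert, in place of the $i$-th long step, the ordinary right step $(l(k+2i)-2,h_i)\to(l(k+2i)-1,h_i+1)$. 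Because every $\mathcal{F}^1_{nl-1}$ is invariant under $x\mapsto x+2l$, the translated segments are legal paths in the model with steps $\mathbb{S}$, all step weights are preserved (weight-$2$ reflections remain weight-$2$, and the weight-$1$ long step becomes a weight-$1$ right step), and the pieces concatenate into one path from $(0,0)$ to $(M+2jl,N)$, that is, a path counted by $M^{m+1+2j}_{(M+2jl,N)}$. The inverse folds a path counted by $M^{m+1+2j}_{(M+2jl,N)}$ at its crossings of the filters $\mathcal{F}^1_{l(k+2i)-1}$, $i=1,\dots,j$, which are unique by the one-way property, translating the part lying above each crossing down by $2l$ and inserting a long step. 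Summing over $j$ and using that $M^{m+1+2j}_{(M+2jl,N)}=0$ once $M+2jl>N$, which (since $M\ge lm-1$) happens precisely for $j>\big[\frac{N-lm+1}{2l}\big]$, gives the claimed formula.

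The main obstacle is the bookkeeping behind the one-way property and the canonicity of the segment decomposition: one must verify that $\mathcal{F}^1_{lk-1}$ really confines a path to strips of index $\ge k+1$ immediately after a long step, that between two consecutive long steps a path cannot escape the strips of index $k+1,k+2$, that the source and target columns of $\mathbb{S}(k)$ lie exactly on filter lines so that the unfolding removes and restores precisely the right steps across the shifted filters, and that translating by multiples of $2l$ never interferes with the wall $\mathcal{W}^L_0$; once these points are settled, periodicity of $\{\mathcal{F}^1_{nl-1}\}$ does the rest. An alternative, closer to the methods of the paper, is an induction on $m$: the cases $m=k$ and $m=k+2$ are formulas (\ref{k1}) and (\ref{k3}) of Lemma~\ref{l1}, and the passage from the $(m+1)$-th strip to the $(m+2)$-th strip can be carried out using Theorem~\ref{BD_cor} and Corollary~\ref{BD_cor_cor} exactly as (\ref{k1}) and (\ref{k3}) are chained in the proof of Lemma~\ref{l1}; the bijection, however, treats all $m\ge k$ uniformly and makes the role of filter periodicity transparent.
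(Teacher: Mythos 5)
Your unfolding bijection is correct, and it is a genuinely different argument from the paper's. The paper deduces the corollary from Lemma \ref{l1} by iterating the boundary-point/congruence machinery: each term $M^{k+1+2j}_{(M+2jl,N)}$ induces paths that descend through regions congruent (by periodicity of the filters) to regions of the auxiliary model, so every subsequent strip acquires the corresponding translated term --- essentially your fallback suggestion of an induction on $m$ via Theorem \ref{BD_cor} and Corollary \ref{BD_cor_cor}. Your main route instead classifies paths by the exact number $j$ of long steps, cuts at those steps, translates the $i$-th piece by $2il$, and replaces each long step by the elementary crossing $(l(k+2i)-2,h_i)\to(l(k+2i)-1,h_i+1)$; since all filters are of type $1$ and sit at every $x=nl-1$, residues mod $l$, and hence the local step sets and weights (including the weight-$2$ reflections), are preserved by the shift, the inverse is folding at the unique crossings of $\mathcal{F}^1_{l(k+2i)-1}$, and the truncation of the sum follows from $M^{m+1+2j}_{(M+2jl,N)}=0$ when $M+2jl>N$ together with $M\ge lm-1$. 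The bijection buys uniformity in $m\ge k$ and in $j$, a transparent explanation of why each summand enters with coefficient $1$, and independence from the region-by-region induction of Lemma \ref{l1}, whose extension the paper's proof of this corollary only sketches; the paper's route buys reuse of exactly the congruence formalism needed later for $\mathcal{L}_U$, where all families $\mathbb{S}(k)$ are present, a path can cross a given filter several times, and the clean one-crossing-per-filter unfolding no longer applies (the coefficients $F^{(k-1+2j)}_{k-1}$ in (\ref{f1}) are not $1$), so your map would need substantial modification there. Two small wording points: the opening claim that ``no path of either model can move from the $(n+1)$-th strip back down into the $n$-th strip'' should be restricted to elementary steps, since the long step does exactly that (as your next sentence acknowledges); and ``precisely for $j>[\frac{N-lm+1}{2l}]$'' is exact only for $M=lm-1$, while for larger $M$ some terms inside the stated range already vanish --- harmless, because the identity only needs that no nonzero term lies beyond the range.
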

\begin{proof}
	The result of Lemma \ref{l1} can be extended to other strips in a similar fashion to the proof of (\ref{k3}). Each new term $M^{k+1+2j}_{(M+2jl,N)}$ in $\tilde{M}^{k+1}_{(M,N)}$ induces paths, which further descend from $(k+1)$-th strip to boundary points of each consequent $(k+1+m)$-th strip. The region in which these induced paths descend is congruent to the region, where they would continue to descend in the auxiliary path model due to the periodicity of filter restrictions. Hence, each $\tilde{M}^{k+1+m}_{(M,N)}$ acquires term $M^{k+1+m+2j}_{(M+2jl,N)}$, which proves the statement.
\end{proof}
During this subsection, we introduced long steps and proved lemmas, necessary for counting weighted numbers of paths in modification of the auxiliary lattice path model, relevant for the representation theory of $U_q(sl_2)$ at roots of unity.

\section{On Decomposition of $T(1)^{\otimes N}$ for $U_q(sl_2)$ at Roots of Unity}\label{RelU}
Consider the auxiliary lattice path model with filter restrictions of type $1$, in the presence of steps
\begin{equation*}
\mathbb{S}_U\equiv\mathbb{S}\cup\Big(\bigcup_{k=1}^\infty\mathbb{S}(k) \Big),
\end{equation*}
and denote it as $\mathcal{L}_U$. The arrangement of steps for points of $\mathcal{L}_U$ is depicted in {Figure} \ref{malpm}.

\begin{figure}[h!]
	{\includegraphics[width=280pt]{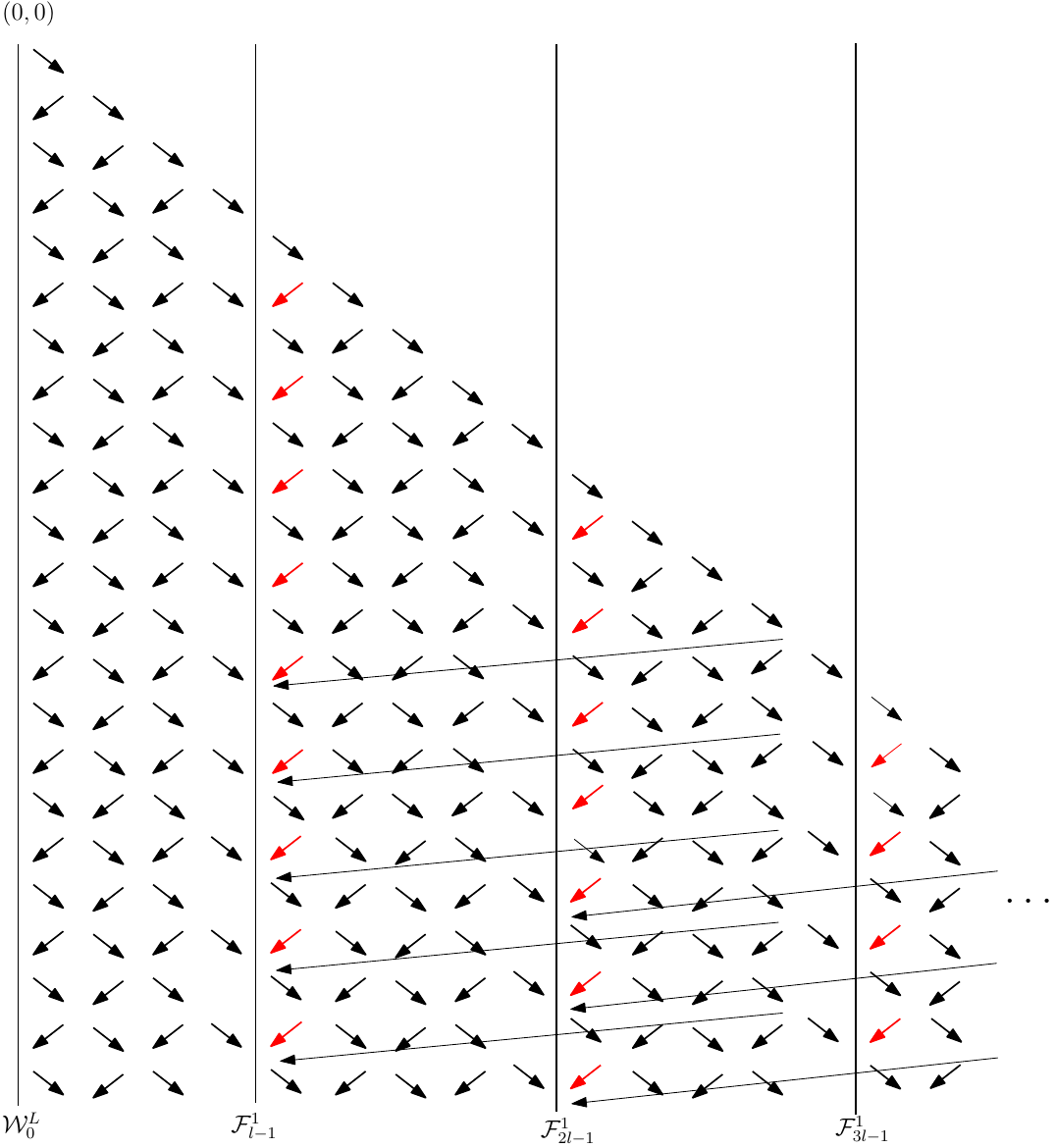}}
	\caption{Arrangement of steps for points of the lattice path model $\mathcal{L}_U$. Here, we depict the case, where $l=5$.}
	\label{malpm}
\end{figure}

Now, fix $q=e^{\frac{\pi i}{l}}$ and $l$ is odd. Category $\mathbf{Rep}(U_q(sl_2))$ is the category of representations of $U_q(sl_2)$, a quantized universal enveloping algebra of $sl_2$ with divided powers. Consider tensor product decomposition of a tensor power of fundamental $U_q(sl_2)$-module
\begin{equation}
T(1)^{\otimes N}=\bigoplus_{k=0}^{N} M^{(l)}_{T(k)}(N)T(k),\quad T(1),\,T(k)\in\mathbf{Rep}(U_q(sl_2)),
\end{equation} 
where $M^{(l)}_{T(k)}(N)$ is the multiplicity of $T(k)$ in tensor product decomposition. We consider the tensor powers of a tilting module, and as a category of tilting modules is closed under finite tensor products, it can be decomposed into a direct sum of tilting modules. The highest weight of $T(k)$ can be written as $k=lk_1+k_0$. The Grothendieck ring of the category of tilting modules over $U_q(sl_2)$ at odd roots of unity gives the following tensor product \mbox{rules (\cite{TPR,LPRS})}
\begin{equation*}
T(k_0) \otimes T(1) = T(k_0+1) \oplus T(k_0 -1),  \quad  0 \leq  k_0 \leq l-2;
\end{equation*}
\begin{eqnarray*}
	T(lk_1 + k_0) \otimes T(1) = T(lk_1 + k_0 +1) \oplus T(lk_1 + k_0 -1) ,\quad 1 \leq k_0 \leq l-3, k_1\geq 1;
\end{eqnarray*}
\begin{eqnarray*} 
	T(lk_1 + l-2) \otimes T(1) = T(l(k_1 +1) -3) \oplus T((k_1+1)l-1) \oplus T((k_1-1)l-1) ,\quad  k_1\geq 1;
\end{eqnarray*}
\begin{equation*}
T(l k_1 -1) \otimes T(1) = T(lk_1) , \quad  k_1 \geq 1;
\end{equation*}
\begin{equation*}
T(lk_1) \otimes T(1) = T(lk_1 +1) \oplus 2 T(lk_1 -1) , \quad  k_1\geq 1.
\end{equation*}

\begin{theorem}[\cite{LPRS}]
	The multiplicity of the tilting $U_q(sl_2)$-module $T(k)$ in the decomposition 
	of $T(1)^{\otimes N}$ is equal to the weighted number of lattice paths on $\mathcal{L}_U$ connecting $(0,0)$ and $(k,N)$ with weights given by multiplicities of elementary steps $\mathbb{S}_U$.
\end{theorem}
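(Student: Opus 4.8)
The plan is to prove the two quantities equal by induction on $N$, showing that each, as a function of $N$ (with $k$ ranging over all admissible highest weights), obeys one and the same recursion together with the same value at $N=0$. On the combinatorial side, write $Z_{(k,N)}$ for the weighted number of paths of $\mathcal{L}_U$ from $(0,0)$ to $(k,N)$. Removing the last step of a path and using the definition of the weighted number of paths gives at once
\[
Z_{(k,N+1)}=\sum_{(k',N)\xrightarrow{w}(k,N+1)}w\,Z_{(k',N)},
\]
the sum running over all steps of $\mathbb{S}_U$ ending at $(k,N+1)$ that are admissible for $\mathcal{W}^L_0$ and $\{\mathcal{F}^1_{nl-1}\}_{n\in\mathbb{N}}$, with $w$ the weight of the step. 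Reading the arrangement of steps of $\mathcal{L}_U$ off Figure \ref{malpm}, I would then list, column by column, the incoming steps at $x=k$ with their weights: at a generic column, unit-weight steps from $x=k-1$ and $x=k+1$; at $x=0$, only a unit-weight step from $x=1$ (the left wall $\mathcal{W}^L_0$); at a filter column $x=lk_1-1$, a unit-weight step from $x=lk_1-2$, the weight-$2$ left step from $x=lk_1$ produced by $\mathcal{F}^1_{lk_1-1}$, and --- present exactly when $k_1\ge 1$ --- the long step from $x=l(k_1+2)-2$ belonging to $\mathbb{S}(k_1)$; at $x=lk_1$, unit-weight steps from $x=lk_1-1$ (crossing the filter) and from $x=lk_1+1$; and at a corner column $x=lk_1+l-2$, only a unit-weight step from $x=lk_1+l-3$, since $\mathcal{F}^1_{l(k_1+1)-1}$ forbids a left step from $x=l(k_1+1)-1$.

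On the representation-theoretic side, from $T(1)^{\otimes(N+1)}=\big(T(1)^{\otimes N}\big)\otimes T(1)$ and additivity of $-\otimes T(1)$,
\[
M^{(l)}_{T(k)}(N+1)=\sum_{k'}\big[\,T(k')\otimes T(1):T(k)\,\big]\,M^{(l)}_{T(k')}(N),
\]
the structure constants being read off the tilting tensor product rules displayed above, with the convention $T(m)=0$ for $m<0$. The core of the argument is to match these structure constants with the weights listed in the previous paragraph. The dictionary I would use is: the generic rule $T(k_0)\otimes T(1)=T(k_0+1)\oplus T(k_0-1)$ --- including its first-strip range $0\le k_0\le l-2$, where $k_0=0$ collapses to $T(0)\otimes T(1)=T(1)$ because $T(-1)=0$ --- matches the generic steps $\mathbb{S}_R,\mathbb{S}_L$ together with the left wall; the pair $T(lk_1-1)\otimes T(1)=T(lk_1)$ and $T(lk_1)\otimes T(1)=T(lk_1+1)\oplus 2T(lk_1-1)$ encodes precisely the filter $\mathcal{F}^1_{lk_1-1}$, i.e. the unit-weight right step at $x=lk_1-1$ and the unit-weight right step plus weight-$2$ left step at $x=lk_1$; and the corner rule $T(lk_1+l-2)\otimes T(1)=T(l(k_1+1)-3)\oplus T((k_1+1)l-1)\oplus T((k_1-1)l-1)$ has its first two summands realized by the two generic steps out of $x=lk_1+l-2$ and its third summand $T((k_1-1)l-1)$ realized by the long step $\mathbb{S}(k_1-1)$, which lies in $\mathbb{S}_U=\mathbb{S}\cup\big(\bigcup_{k=1}^{\infty}\mathbb{S}(k)\big)$ exactly when $k_1\ge 2$ and otherwise contributes $T(-1)=0$.

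Comparing the two lists term by term, for each target column $x=k$ and each source, then shows that the two recursions coincide. For the base case, $T(1)^{\otimes 0}=T(0)$ gives $M^{(l)}_{T(k)}(0)=\delta_{k,0}$, while the only path of $\mathcal{L}_U$ from $(0,0)$ to $(k,0)$ is the empty path, which exists only for $k=0$, so $Z_{(k,0)}=\delta_{k,0}$; induction on $N$ then yields $M^{(l)}_{T(k)}(N)=Z_{(k,N)}$ for all $N$ and $k$, which is the claim.

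The hard part is not conceptual but the bookkeeping: one must run the case analysis of the second paragraph carefully at the boundary columns $x=0$, $x=lk_1-1$, $x=lk_1$ and $x=lk_1+l-2$, checking that these column types are exhaustive and that their overlaps (which occur for small $l$) are consistent, and one must track throughout the convention $T(m)=0$ for $m<0$ --- this is exactly what makes the absence of the long steps $\mathbb{S}(k)$ with $k\le 0$ agree with the truncated tensor product rules in the first strip and at $k_1=1$.
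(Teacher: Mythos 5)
Your proposal is correct and is essentially the paper's own argument: the paper likewise converts the tilting tensor product rules into the recursion $M^{(l)}_{T(k)}(N+1)=\sum_{k'}[T(k')\otimes T(1):T(k)]\,M^{(l)}_{T(k')}(N)$ (written out column by column, including the weight-$2$ filter step and the long-step term $M^{(l)}_{T((k_1+2)l-2)}(N)$) and observes that it coincides with the step-removal recursion for weighted path counts in $\mathcal{L}_U$, with the same initial condition. Your version merely spells out the same matching in more detail (the $T(m)=0$ convention for $m<0$, the base case $N=0$, and the boundary-column bookkeeping), all of which checks out against Figure \ref{malpm} and the stated fusion rules.
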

\begin{proof}
	Tensor product rules allow the following recursive description of multiplicities
	\begin{equation*}
	M^{(l)}_{T(0)}(N+1)=M^{(l)}_{T(1)}(N);
	\end{equation*}
	\begin{eqnarray*}
		M^{(l)}_{T(lk_1+k_0)}(N+1)=M^{(l)}_{T(lk_1+k_0-1)}(N)+M^{(l)}_{T(lk_1+k_0+1)}(N), \quad 1\leq k_0 \leq l-3,\, k_1\geq 0;
	\end{eqnarray*}
	\begin{equation*}
	M^{(l)}_{T(lk_1-2)}(N+1)=M^{(l)}_{T(lk_1-3)}(N),\quad k_1\geq 1;
	\end{equation*}
	\begin{eqnarray*}
		M^{(l)}_{T(lk_1-1)}(N+1)=M^{(l)}_{T(lk_1-2)}(N)+2M^{(l)}_{T(lk_1)}(N)+M^{(l)}_{T((k_1+2)l-2)}(N),\quad k_1\geq 1;
	\end{eqnarray*}
	\begin{equation*}
	M^{(l)}_{T(lk_1)}(N+1)=M^{(l)}_{T(lk_1-1)}(N)+M^{(l)}_{T(lk_1+1)}(N),\quad k_1\geq 1.
	\end{equation*}
	{This} recursion coincides with the recursion for weighted numbers of paths descending from $(0,0)$ to $(k,N)$ in lattice path model $\mathcal{L}_U$. The latter is depicted in Figure \ref{malpm}.
\end{proof}
The main goal of the following section is to obtain the explicit formula by combinatorial means, mainly counting lattice paths in modification $\mathcal{L}_U$ of the auxiliary lattice path model.

\section{Counting Paths}\label{PATHS}
Consider the lattice path model $\mathcal{L}_U$. From now on, following Definition \ref{multws}, we denote by multiplicity function in the $j$-th strip $\tilde{M}^j_{(M,N)}$ the weighted number of paths in set 
\begin{equation*}
L_N((0,0)\to (M,N);\mathbb{S}_U\;|\;\mathcal{W}^L_0,\{\mathcal{F}_{nl-1}^{1}\}, n\in \mathbb{N})
\end{equation*}
with the endpoint $(M,N)$ that lies within $(j-1)l-1 \leq M \leq jl-2$
\begin{equation}
\tilde{M}^j_{(M,N)} = Z(L_N((0,0)\to (M,N);\mathbb{S}_U\;|\;\mathcal{W}^L_0,\{\mathcal{F}_{nl-1}^{1}\}, n\in \mathbb{N})),
\end{equation}
where 
\begin{equation*}
\mathbb{S}_U=\mathbb{S}\cup\Big(\bigcup_{k=1}^\infty\mathbb{S}(k) \Big).
\end{equation*}
and $M\geq 0$ and $j=\Big[\frac{M+1}{l}+1\Big]$. The main goal of this section is to derive an explicit formula for $\tilde{M}^j_{(M,N)}$.

\begin{lemma}
	For the lattice path model $\mathcal{L}_U$
	\begin{equation}
	\tilde{M}^{1}_{(M,N)}=M^{1}_{(M,N)},
	\end{equation}
	and for $k\in\mathbb{N}$,
	\begin{equation}\label{f1}
	\tilde{M}^{k+1}_{(M,N)}=\sum_{j=0}^{[\frac{N-lk+1}{2l}]} F^{(k-1+2j)}_{k-1} M^{k+1+2j}_{(M+2jl,N)},
	\end{equation}
\end{lemma}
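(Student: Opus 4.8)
The first assertion, $\tilde M^1_{(M,N)}=M^1_{(M,N)}$, I would settle by a direct inspection of $\mathcal{L}_U$ near the first strip. None of the long‑step sequences $\mathbb{S}(m)$, $m\ge 1$, has its source point $x=l(m+2)-2$ or its target point $x=lm-1$ inside the first strip $-1\le M\le l-2$; moreover, at the point $x=l-1$ the filter $\mathcal{F}^1_{l-1}$ permits only the rightward step, so a descending path that has left the first strip can never return to $x\le l-2$. Hence every descending path that ends in the first strip uses only elementary steps from $\mathbb{S}$ and is confined to that strip, and the weighted count agrees with the one in the plain auxiliary model. (Equivalently: in the sense of Section~\ref{congandbound} the first strip of $\mathcal{L}_U$ is congruent to the first strip of the auxiliary model, and it has no boundary points coming from long steps, so Theorem~\ref{BD_cor} applies.)

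For formula (\ref{f1}) the plan is to check that the right‑hand side obeys the same recursion and the same initial data as $\tilde M^{k+1}$ in $\mathcal{L}_U$, arguing by induction on $N$ and treating all $k$ simultaneously. Write $R^{k+1}_{(M,N)}:=\sum_{j\ge 0}F^{(k-1+2j)}_{k-1}M^{k+1+2j}_{(M+2jl,N)}$, the sum being extended harmlessly to all $j\ge 0$ since $M^{k+1+2j}_{(M+2jl,N)}=0$ once $j>\big[\tfrac{N-lk+1}{2l}\big]$; note $F^{(k-1)}_{k-1}=1$, so the $N=0$ base and the regime where no long step is yet reachable give $R^{k+1}=M^{k+1}=\tilde M^{k+1}$. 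Since a descending step raises $N$ by exactly one, $\tilde M^{k+1}_{(M,N)}$ and $R^{k+1}_{(M,N)}$ are each determined by level‑$(N-1)$ data, so by the boundary principle (Lemma~\ref{BD}) it suffices to verify the recursion at each point of the strip $(k-1)l-1\le M\le kl-2$ — wait, of the strip $kl-1\le M\le l(k+1)-2$. At every point other than the left filter point $(kl-1,N)$ the recursion for $\tilde M^{k+1}$ involves only points of that same strip, because no long step of $\mathbb{S}_U$ targets the interior or right edge of a strip; there the identity matches $R^{k+1}$ term by term, since each $M^{k+1+2j}_{(\cdot,N)}$ obeys the plain‑auxiliary recursion and translation by $2jl$ carries filter positions to filter positions.

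The one substantial case is the left filter point $(kl-1,N)$, which in $\mathcal{L}_U$ is reached from $(kl-2,N-1)$ (into strip $k$), from $(kl,N-1)$ with weight $2$, and — this is the new ingredient — from $(l(k+2)-2,N-1)$ via the long step $\mathbb{S}(k)$, so that
\[
\tilde M^{k+1}_{(kl-1,N)}=\tilde M^{k}_{(kl-2,N-1)}+2\,\tilde M^{k+1}_{(kl,N-1)}+\tilde M^{k+2}_{(l(k+2)-2,N-1)} .
\]
Replacing the three right‑hand terms by $R^{k}_{(kl-2,N-1)}$, $R^{k+1}_{(kl,N-1)}$, $R^{k+2}_{(l(k+2)-2,N-1)}$ (inductive hypothesis), expanding $R^{k+1}_{(kl-1,N)}$ by the plain‑auxiliary filter recursion at the left edges of the strips $k+1+2j$, and re‑indexing the $R^{k+2}$‑sum by $j\mapsto j+1$, the whole identity collapses to the numerical relations $F^{(k-1)}_{k-1}=F^{(k-2)}_{k-2}$ and, for $j\ge 1$,
\[
F^{(k-1+2j)}_{k-1}=F^{(k-2+2j)}_{k-2}+F^{(k-2+2j)}_{k},
\]
which is Pascal's rule applied twice to $F^{(N)}_M=\binom{N}{(N-M)/2}-\binom{N}{(N-M)/2-1}$.

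I expect the main difficulty to be entirely organizational rather than conceptual: one must pin down exactly which elementary and long steps feed each type of lattice point in $\mathcal{L}_U$ (in particular that a strip receives long steps only at its left edge, and that only $\mathbb{S}(k)$ reaches strip $k+1$), keep the shifts $2jl$ in the summation indices aligned across the three models so that the coefficient comparison above is legitimate, and check that the summation bounds $\big[\tfrac{N-lk+1}{2l}\big]$ are consistent under the re‑indexing and under passage from level $N$ to level $N-1$. Once this bookkeeping is in place the appearance of the Catalan‑type coefficient $F^{(k-1+2j)}_{k-1}$ is forced by the Pascal identity, and the proof of (\ref{f1}) is complete.
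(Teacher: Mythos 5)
Your proposal is correct, but it takes a genuinely different route from the paper. The paper proves (\ref{f1}) by iterating its long-step machinery: it adds the families $\mathbb{S}(1),\mathbb{S}(2),\dots$ one at a time, invokes Corollary \ref{l2} (hence Lemma \ref{l1} and the congruence/boundary apparatus of Theorem \ref{BD_cor} and Corollary \ref{BD_cor_cor}) for a single family, and then identifies the resulting recursion on the coefficients pictorially (Figures \ref{U30}--\ref{U32}) as the ballot/Catalan-type numbers $F^{(k-1+2j)}_{k-1}$. You instead verify directly, by simultaneous induction on $N$ over all strips, that the right-hand side of (\ref{f1}) satisfies the $\mathcal{L}_U$ recursion: you correctly isolate that long steps feed a strip only at its filter column $x=lk-1$ (targets of $\mathbb{S}(m)$ are the points $x=lm-1$ and only $\mathbb{S}(k)$ lands in strip $k+1$), that elsewhere both sides obey the same intra-strip recursion since translation by $2jl$ preserves the filter pattern, and that at $(lk-1,N)$ the three-term recursion $\tilde{M}^{k+1}_{(lk-1,N)}=\tilde{M}^{k}_{(lk-2,N-1)}+2\tilde{M}^{k+1}_{(lk,N-1)}+\tilde{M}^{k+2}_{(l(k+2)-2,N-1)}$ reduces, after expanding the plain-model recursion and reindexing, to $F^{(k-1)}_{k-1}=F^{(k-2)}_{k-2}$ and $F^{(k-1+2j)}_{k-1}=F^{(k-2+2j)}_{k-2}+F^{(k-2+2j)}_{k}$, which indeed follow from Pascal's rule; the vanishing of terms beyond the cutoff and the bound $N\geq M+2l$ for any path using a long step settle the base case. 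What each approach buys: yours is more elementary and self-contained, bypassing Lemma \ref{l1}, Corollary \ref{l2} and the congruence formalism, and it replaces the paper's picture-based identification of the coefficients with an explicit binomial identity; the paper's argument is structurally more informative (it shows how each family of long steps contributes induced paths, which is also how the coefficients arise) and reuses machinery needed elsewhere, e.g.\ in Theorem \ref{Th1}. Two small points to nail down in a final write-up: for $k=1$ the strip-$k$ term in your left-edge computation is the first strip, so it contributes only the single term $M^{1}_{(l-2,N-1)}$ with coefficient $1$, consistent with your identity since $F^{(2j-1)}_{-1}=0$ for $j\geq 1$; and the weights entering the filter-column recursion (weight $1$ from $(lk-2,N-1)$ and from the long step, weight $2$ from $(lk,N-1)$) should be checked against the definition of $\mathcal{F}^1_{lk-1}$, as you implicitly did.
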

\begin{proof}
	The formula for the $1$st strip follows immediately as long steps have no impact and multiplicity is the same as in the auxiliary lattice path model.
	
	This lemma follows from gradually adding each $\mathbb{S}(k)$ for $k=1,2,\ldots$ to the initial set of steps $\mathbb{S}$ and applying results of the Corollary \ref{l2} repeatedly. Let us start with $k=1$. From Corollary \ref{l2}, it follows that in case of having one series of long steps, for $(m+1)$-th strip we would simply have
	\begin{equation}\label{f2}
	\tilde{M}^{m+1}_{(M,N)}\lvert_{\textit{$k=1$}}=\sum_{\substack{j=0}}^{[\frac{N-lm+1}{2l}]} M^{m+1+2j}_{(M+2jl,N)},
	\end{equation}
	where $m\in\mathbb{N}$. This is a summation of multiplicities in the auxiliary lattice path model with trivial coefficients. This situation is depicted in Figure \ref{U30}.

	\begin{figure}[h!]
		{\includegraphics[width=300pt]{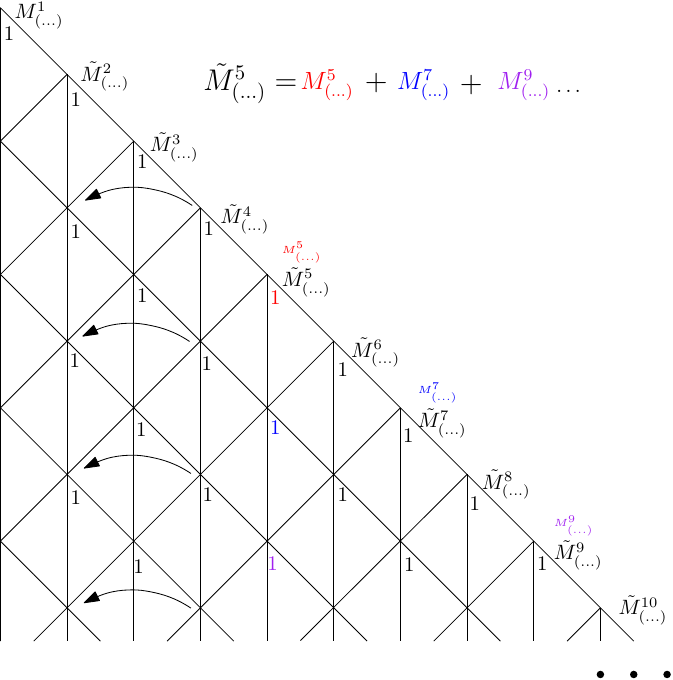}}
		\caption{In each strip, triangles with coefficients correspond to terms in Formula (\ref{f2}). Each term is given by the multiplicity of a strip in the auxiliary lattice path model, situated to the far right of the considered triangle. As an example, we show how this mnemonic rule works for $\tilde{M}^{5}_{(M,N)}$.}
		\label{U30}
	\end{figure}

	In each strip, triangles with coefficients correspond to terms in Formula (\ref{f2}). Each term is given by the multiplicity function of a strip in the auxiliary lattice path model, situated to the far right of the considered triangle. The coefficient in a triangle tells us how many terms corresponding to this multiplicity function are in Formula (\ref{f2}). This mnemonic rule comes from considerations in Figure \ref{inducedpath}. The proofs of Theorem \ref{l1} and Corollary \ref{l2} define a recursion on the coefficients near multiplicity functions from the auxiliary lattice path model in Formula (\ref{f2}). This recursion is depicted in Figure \ref{U31}.

	\begin{figure}[h!]
		{\includegraphics[width=400pt]{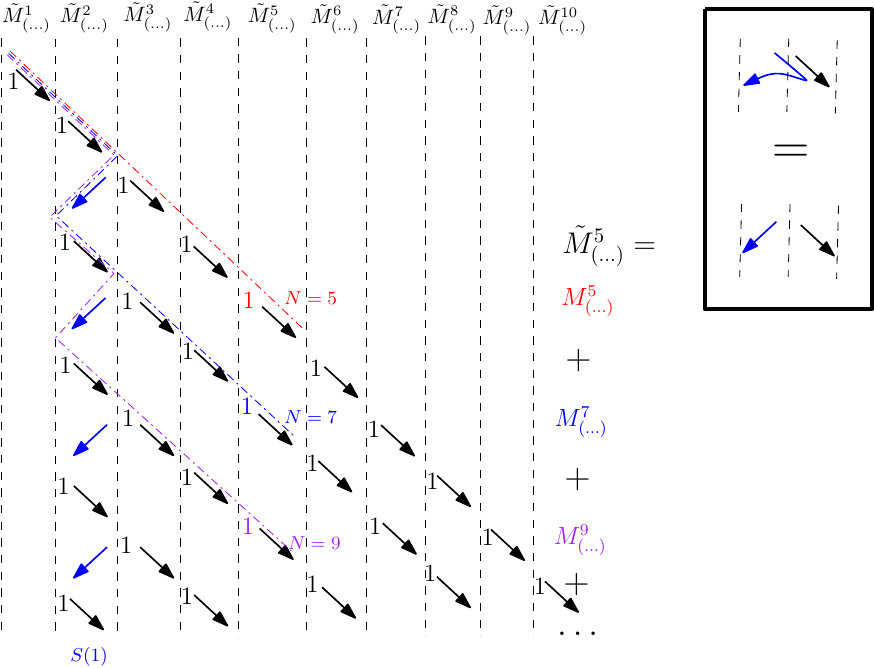}}
		\caption{Numbers near vertices of the lattice are the coefficients in Figure \ref{U30}. Blue arrows denote steps in the recursion, which were added by the long steps $\mathbb{S}(1)$ in the lattice path model. Length of paths $N$, descending to a considered vertex of a lattice gives the number of the strip in the auxiliary lattice path model, the multiplicity function of which is being added to (\ref{f2}) as a term. As an example, we show formula for $\tilde{M}^{5}_{(M,N)}\lvert_{\textit{$k=1$}}$.}
		\label{U31}
	\end{figure}

	Blue arrows correspond to steps in the recursion on the coefficients, which were added as a consequence of the presence of long steps $\mathbb{S}(1)$. In the black frame, it is noted that although long steps have length $2l$, as shown, for example, in Figure \ref{U30}, their source and target points belong to two adjacent strips, so when dealing with the coefficients it is convenient to denote blue arrows as in the Figure \ref{U31}. Long steps, following the idea of the proof of Formula (\ref{k3}) depicted in Figure \ref{inducedpath}, induce paths that descend further, giving the result as in Corollary \ref{l2}. Similarly, blue arrows induce paths in the lattice, which descend further, adding new terms in (\ref{f2}).
	
	Note, that without blue arrows we would have obtained a single diagonal path with weighted numbers of paths equal to $1$. This situation would give us coefficients as in the formula for multiplicities in the auxiliary lattice path model, meaning that we would have $\tilde{M}^k_{(\ldots)}=M^k_{(\ldots)}$. This is exactly what we would have in case we removed the long steps $\mathbb{S}(1)$ in the lattice path model.
	
	Again, following the idea of the proof of Corollary \ref{l2}, induced paths descend further to each consequent strip as if they were to continue to descend in the auxiliary lattice path model, so additional terms are dependent on how many strips these induced paths will cross while they descend. In the recursion on the coefficients, it is manifested in the fact that the length of a descending path in Figure \ref{U31} gives the number of strips in the auxiliary lattice path model, to which the additional term corresponds.

	Now, our main goal is to apply $\mathbb{S}(k)$ for other $k$. As the considerations above suggest, applying $\mathbb{S}(k)$ for $k=1,2,\ldots$ induces other sequences of blue arrows. {From Figure} \ref{U32}, we see that the recursion for the coefficients near multiplicity functions is satisfied by Catalan numbers. This proves Formula (\ref{f1}).
	
	\vspace{-6pt}
	\begin{figure}[h!]
		{\includegraphics[width=350pt]{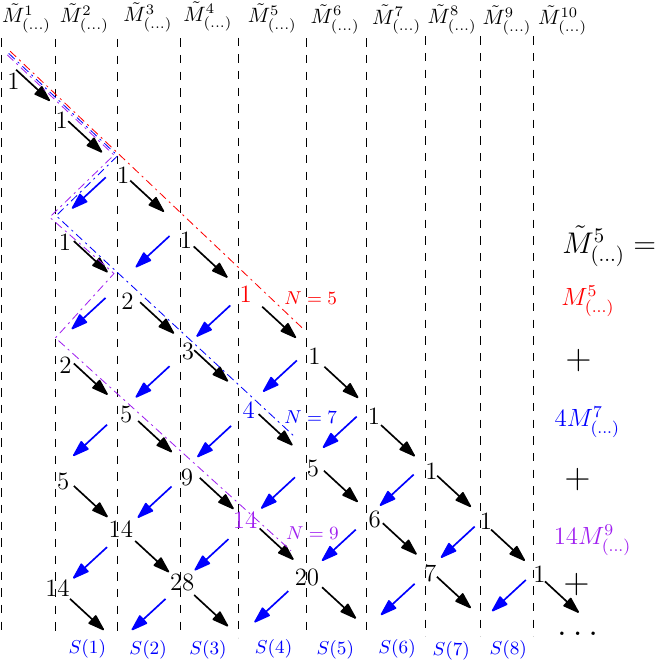}}
		\caption{In each strip, numbers in vertices of a lattice correspond to terms in Formula (\ref{f2}). Each term is given by the multiplicity of a strip in the auxiliary lattice path model, the number of which is given by the length of a path, descending to the considered coefficient. As an example, we show formula for $\tilde{M}^{5}_{(M,N)}$.}
		\label{U32}
	\end{figure}

\end{proof}

Note, that action of black arrows on terms in (\ref{f1}) follows from Lemma \ref{counting_lemma} and the periodicity of filter restrictions. The action of blue arrows on terms in (\ref{f1}) follows from Corollary \ref{BD_cor_cor}. Now let us prove the main theorem.
\begin{theorem}\label{Th1} For $k\in\mathbb{N}\cup\{0\}$ we have
	\begin{equation}\label{qq1}
	\tilde{M}^{k+1}_{(M,N)}=F^{(N)}_{M}+\sum_{j=1}^{[\frac{N-lk+1}{2l}+\frac{1}{2}]}F^{(N)}_{-2lk+M-2jl}+\sum_{j=1}^{[\frac{N-lk+1}{2l}]}F^{(N)}_{M+2jl}
	\end{equation}
	where $lk-1\leq M\leq l(k+1)-2$.
\end{theorem}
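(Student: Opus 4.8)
The plan is to feed the closed form of Theorem \ref{mainps} into formula (\ref{f1}) and watch the resulting double sum collapse to (\ref{qq1}). Under the hypothesis $lk-1\le M\le l(k+1)-2$ the point $(M+2jl,N)$ lies in the $(k+1+2j)$-th strip of the plain auxiliary model, so Theorem \ref{mainps} applies with strip index $j'=k+1+2j$ and writes $M^{j'}_{(M+2jl,N)}$ as a signed sum of terms $F^{(N)}_{M+2ml}$, $m\in\mathbb{Z}$, whose coefficients come from the binomial polynomials $P_{j'}$ and $Q_{j'}$. After substituting this into (\ref{f1}) and regrouping by the value of the argument $M+2ml$, one sees at once that the argument index $m$ takes only values $m\ge 0$ (from the two pieces of Theorem \ref{mainps} with shifts $+4il$ and $+2l+4il$) or $m\le-(k+1)$ (from the two pieces with shifts $-4il-2j'l$ and $-4il-2(j'+1)l$); hence the coefficient of $F^{(N)}_{M+2ml}$ vanishes automatically for $-k\le m\le-1$, and the coefficients on the negative branch are the mirror image of those on the non-negative branch under $m\mapsto-(k+1)-m$. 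So everything reduces to showing that for each $m\ge 0$ the coefficient of $F^{(N)}_{M+2ml}$ equals $1$.

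That coefficient is
\begin{equation*}
c_m=\sum_{\substack{i,j\ge 0\\ j+2i=m}}b^{(k)}_j P_{k+1+2j}(i)-\sum_{\substack{i,j\ge 0\\ j+2i=m-1}}b^{(k)}_j Q_{k+1+2j}(i),\qquad b^{(k)}_j:=F^{(k-1+2j)}_{k-1}=\binom{k-1+2j}{j}-\binom{k-1+2j}{j-1},
\end{equation*}
and the key step is to evaluate $\sum_{m\ge 0}c_m x^m$ by generating functions. The ballot numbers satisfy $\sum_{j\ge 0}b^{(k)}_j x^j=C(x)^k$ with $C(x)=\frac{1-\sqrt{1-4x}}{2x}$ the Catalan series, while the definitions of $P_{j'},Q_{j'}$ in Theorem \ref{mainps} yield
\begin{equation*}
\sum_{i\ge 0}P_{j'}(i)y^i=\frac{(1+\sqrt y)^{j'-2}+(1-\sqrt y)^{j'-2}}{2(1-y)^{j'-1}},\qquad \sum_{i\ge 0}Q_{j'}(i)y^i=\frac{(1+\sqrt y)^{j'-2}-(1-\sqrt y)^{j'-2}}{2\sqrt y\,(1-y)^{j'-1}}.
\end{equation*}
Putting $j'=k+1+2j$ and $y=x^2$, the $(1+\sqrt y)^{j'-2}$ contributions of the $P$- and $Q$-sums cancel, only the $(1-x)^{k-1+2j}$ term survives, and using $(1-x^2)^{k+2j}=(1-x)^{k+2j}(1+x)^{k+2j}$ one is left with
\begin{equation*}
\sum_{m\ge 0}c_m x^m=\frac{1}{1-x}\cdot\frac{1}{(1+x)^k}\sum_{j\ge 0}b^{(k)}_j\Bigl(\frac{x}{(1+x)^2}\Bigr)^j=\frac{1}{1-x}\cdot\frac{C\bigl(x/(1+x)^2\bigr)^k}{(1+x)^k}.
\end{equation*}
The classical Catalan identity $C\bigl(x/(1+x)^2\bigr)=1+x$ makes the last fraction equal to $1$, whence $\sum_{m\ge 0}c_m x^m=\frac{1}{1-x}$ and $c_m=1$ for every $m\ge 0$. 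I expect this collapse --- recognizing that the convolution of the ballot numbers with $P_{j'},Q_{j'}$ is governed by $C(x/(1+x)^2)=1+x$ --- to be the crux of the argument; the rest is bookkeeping.

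Two routine points remain. First, one fixes the standard conventions for the degenerate binomials entering $P_1,Q_1$ (equivalently $P_1\equiv 1$, $Q_1\equiv -1$, the reading of Theorem \ref{mainps} at $j'=1$ consistent with the generating functions above); with these the computation is uniform in $k\ge 0$, so the base case $k=0$, where the gap $-k\le m\le -1$ is empty and the preceding lemma gives $\tilde M^1=M^1$, needs no separate treatment beyond recognizing $M^1_{(M,N)}=\sum_{p\in\mathbb{Z}}F^{(N)}_{M+2pl}$ as the classical two-barrier reflection formula. Second, one checks that the finite summation limits of Theorem \ref{mainps} together with the outer limit $[\frac{N-lk+1}{2l}]$ in (\ref{f1}) reproduce the limits $[\frac{N-lk+1}{2l}]$ and $[\frac{N-lk+1}{2l}+\frac12]$ displayed in (\ref{qq1}); this is a short floor estimate using that $F^{(N)}_x=0$ once $|x|>N$ (so that replacing the true, $M$-dependent cutoff by these uniform limits only appends vanishing terms), while at the left edge $M=lk-1$ the antisymmetry $F^{(N)}_{-x-2}=-F^{(N)}_x$ pairs terms of the two sums and consistently collapses the right-hand side to $F^{(N)}_{lk-1}$. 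Assembling these pieces yields (\ref{qq1}).
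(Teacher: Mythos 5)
Your proposal is correct in substance and takes a genuinely different route from the paper's proof. Both arguments start from (\ref{f1}) and Theorem \ref{mainps} and both come down to showing that, after regrouping the substituted expression by the argument of $F^{(N)}$, the only arguments that occur are $M+2ml$ with $m\ge 0$ or $m\le -(k+1)$ and every surviving coefficient equals $1$; your quantity $c_m$ is literally the identity the paper isolates (its equation for the coefficients of $F^{(N)}_{M+2nl}$). The paper proves that identity by induction on $n=\bigl[\frac{N-lk+1}{2l}+\frac12\bigr]$: subtracting consecutive instances reduces it to the alternating sum $\sum_{j}(-1)^jF^{(k-1+2j)}_{k-1}\binom{j+n+k-2}{2j+k-2}=0$, which is then split into two binomial identities certified by Wilf--Zeilberger pairs. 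You instead evaluate the whole convolution in one stroke: the ballot generating function $\sum_j F^{(k-1+2j)}_{k-1}x^j=C(x)^k$, your closed forms for $\sum_i P_{j'}(i)y^i$ and $\sum_i Q_{j'}(i)y^i$ (which are correct), the cancellation of the $(1+\sqrt y)^{j'-2}$ parts, and the functional identity $C\bigl(x/(1+x)^2\bigr)=1+x$ give $\sum_m c_m x^m=\frac{1}{1-x}$, hence $c_m=1$; I checked this computation and the mirror symmetry $m\mapsto -(k+1)-m$ of the negative branch, and they hold. What your route buys is a non-inductive, conceptual explanation of why all coefficients are $1$, with no computer-generated certificates; what the paper's route buys is that the finite summation limits are controlled automatically by the induction, whereas in your argument the limit bookkeeping is deferred. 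On that deferred point, one correction: it is not true that $F^{(N)}_x=0$ whenever $|x|>N$ (by the antisymmetry you quote, $F^{(N)}_{-N-2}=-F^{(N)}_N=-1$), so matching the truncated sums coming from Theorem \ref{mainps} and (\ref{f1}) with the limits displayed in (\ref{qq1}) must use $F^{(N)}_{-x-2}=-F^{(N)}_x$ on the negative branch rather than outright vanishing; this affects only the bookkeeping you labelled routine, not the generating-function crux.
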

\begin{proof}
	We proceed by induction over $[\frac{N-lk+1}{2l}+\frac{1}{2}]$. For $[\frac{N-lk+1}{2l}+\frac{1}{2}]=1$ the \mbox{Formula (\ref{qq1})} obviously gives the same result as (\ref{f1}), which is the base of induction.

	Suppose, that
	\begin{equation}\label{iind2}
	\tilde{M}^{k+1}_{(M,N)}=F^{(N)}_{M}+\sum_{j=1}^{n}F^{(N)}_{-2lk+M-2jl}+\sum_{j=1}^{n-1}F^{(N)}_{M+2jl}
	\end{equation}
	is true. We need to prove this statement for $n+1$. It is sufficient to compare coefficients in (\ref{qq1}) and (\ref{f1}) near $F^{(N)}_{M+2nl}$ and $F^{(N)}_{-2lk+M-2(n+1)l}$. We focus on the term $F^{(N)}_{M+2nl}$, the rest can be performed in a similar fashion. From the structure of $M^{k}_{(M,N)}$, given by Theorem \ref{mainps} and depicted in Figure \ref{U5}, we have two cases: $n+1$ is odd and $n+1$ is even.
	\begin{figure}[h!]
		{\includegraphics[width=150pt]{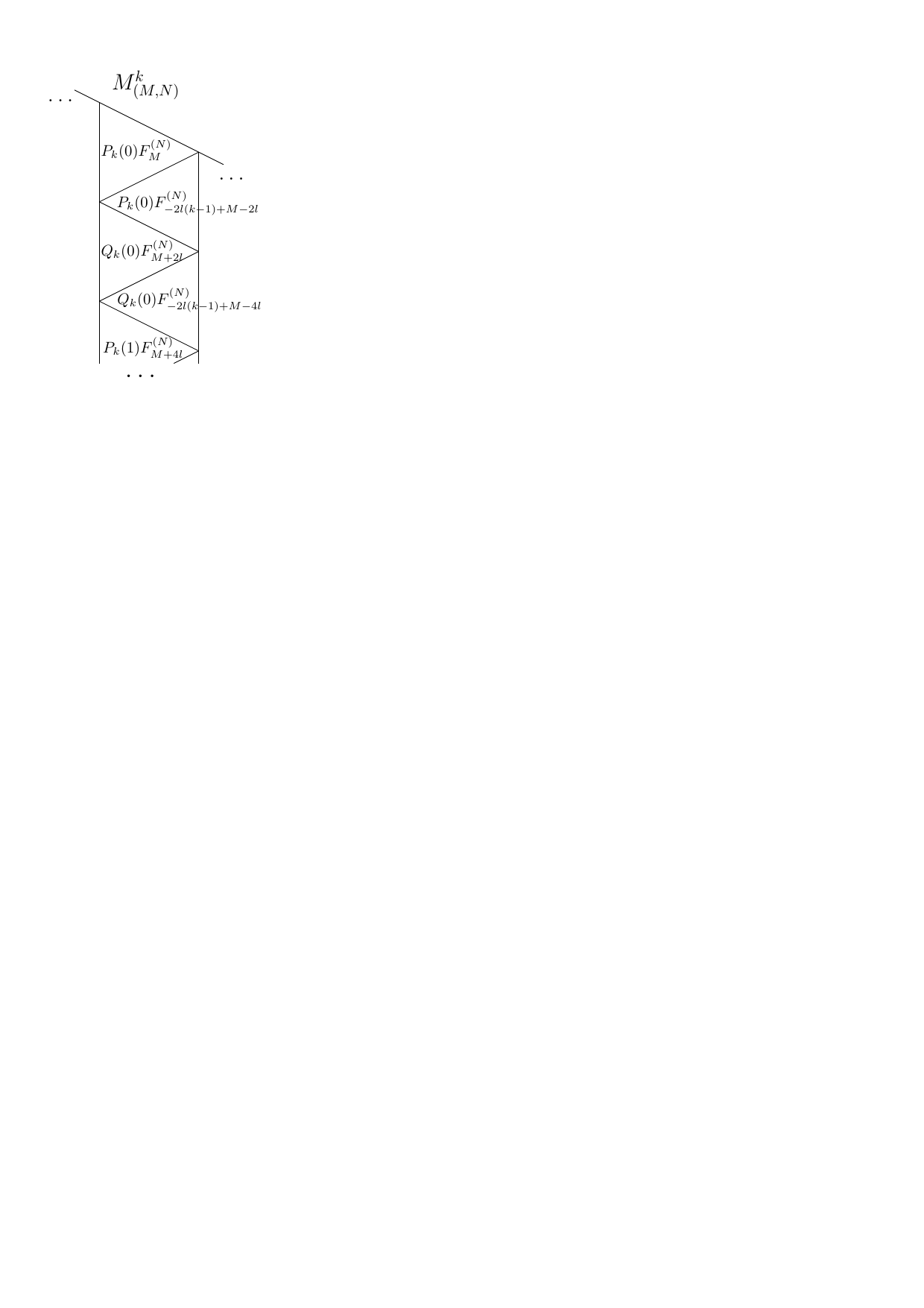}}
		\caption{{Graphical presentation of terms in the formula for $M^{k}_{(M,N)}$, given by Theorem \ref{mainps}. Each term is depicted in accordance to the domain of the lattice where it appears for the first time.}}  
		\label{U5}
	\end{figure}
	For the case of odd $n+1$ the last term in (\ref{f1}) is associated with $P_j(\frac{n}{2})$, for the case of even $n$ it is $Q_j(\frac{n+1}{2})$. We focus on the case of odd $n+1$, the other case can be proven in a similar manner. So, the proof boils down to a combinatorial identity
	\begin{equation}\label{iind1}
	\sum_{\substack{j=0 \\ even\,j}}^{n}F^{(k-1+2j)}_{k-1}P_{k+1+2j}\Big(\frac{n-j}{2}\Big)-\sum_{\substack{j=1 \\ odd\,j}}^{n-1}F^{(k-1+2j)}_{k-1}Q_{k+1+2j}\Big(\frac{n-1-j}{2}\Big)=1.
	\end{equation}
	{From} comparing coefficients near $F^{(N)}_{M+2(n-2)l}$ in the inductive supposition (\ref{iind2}), we know that
	\begin{equation}\label{iind4}
	\sum_{\substack{j=0 \\ even\,j}}^{n-2}F^{(k-1+2j)}_{k-1}P_{k+1+2j}\Big(\frac{n-2-j}{2}\Big)-\sum_{\substack{j=1 \\ odd\,j}}^{n-3}F^{(k-1+2j)}_{k-1}Q_{k+1+2j}\Big(\frac{n-3-j}{2}\Big)=1
	\end{equation}
	is true. Take into account, that
	\begin{equation}
	P_{j}(k)-P_{j}(k-1)=\binom{j+2k-3}{j-3},
	\end{equation}
	\begin{equation}
	Q_{j}(k)-Q_{j}(k-1)=\binom{j+2k-2}{j-3}.
	\end{equation}
	{Subtracting} (\ref{iind4}) from (\ref{iind1}), we obtain
	\begin{eqnarray}
	\Big(\sum_{\substack{j=0 \\ even\,j}}^{n-2}-\sum_{\substack{j=1 \\ odd\,j}}^{n-3}\Big)F^{(k-1+2j)}_{k-1}\binom{j+n+k-2}{2j+k-2}+\\ \nonumber
	+F^{(k-1+2n)}_{k-1}P_{k+1+2n}(0)-F^{(k-1+2n-2)}_{k-1}Q_{k+1+2(n-1)}(0)=0
	\end{eqnarray} 
	{Taking} into account that $P_j(0)=1$ and $Q_j(0)=j-2$ and simplifying further, we arrive at
	\begin{equation}\label{q}
	\sum_{j=0}^{n}(-1)^{j}F^{(k-1+2j)}_{k-1}\binom{j+n+k-2}{2j+k-2}=0.
	\end{equation}
	\begin{equation}
	\sum_{j=0}^{n}(-1)^{j}\binom{j+n+k-2}{2j+k-2}\Big(\binom{2j+k-1}{j}-\binom{2j+k-1}{j-1}\Big)=0
	\end{equation}
	{The} last identity follows from the following lemma.
	\begin{lemma} For $n,k\in\mathbb{N}$
		\begin{equation}\label{onee}
		\sum_{j=0}^{n}(-1)^{j}\binom{j+n+k-2}{2j+k-2}\binom{2j+k-1}{j}=2(-1)^n,
		\end{equation}
		\begin{equation}\label{twoo}
		\sum_{j=0}^{n}(-1)^{j}\binom{j+n+k-2}{2j+k-2}\binom{2j+k-1}{j-1}=2(-1)^n.
		\end{equation}
	\end{lemma}
	\begin{proof}
		Let us first prove Formula (\ref{onee}). Denote
		\begin{equation*}
		F(n,j)=\frac{(-1)^{j+n}}{2}\binom{j+n+k-2}{2j+k-2}\binom{2j+k-1}{j}.
		\end{equation*}
		{We} need to show, that
		\begin{equation*}
		\sum_{j=0}^nF(n,j)=1,\quad\forall n\in\mathbb{N}.
		\end{equation*}
		{For} $n=1$ it is true, which gives us the base of induction. Using Zeilberger's algorithm(\cite{WZ90,Z91,PS94}), we obtain its Wilf--Zeilberger pair
		\begin{eqnarray*}
			G(n,j)=\frac{(-1)^{j+n}j(j+k-1)(k+2n)}{2n(n+1)(j-n-1)(k+2j-1)(k^2+n(n-1)+k(2n-1))}\times\\
			\times(1+k^2n-3n^2+k(n^2-3n-1)+j(2n^2+2kn+k-1))\binom{j+n+k-2}{2j+k-2}\binom{2j+k-1}{j},
		\end{eqnarray*}
		for which
		\begin{equation}
		-F(n+1,j)+F(n,j)=G(n,j+1)-G(n,j)
		\end{equation}
		is true. Applying sum over $j$ to both sides and simplifying telescopic sum to the right, we obtain that
		\begin{equation}
		\sum_{j=0}^nF(n+1,j)=\sum_{j=0}^nF(n,j)+G(n,0)-G(n,n+1).
		\end{equation}
		{Taking} into account, that $G(n,0)=0$ and $G(n,n+1)=F(n+1,n+1)$, we have
		\begin{equation}
		\sum_{j=0}^{n+1}F(n+1,j)=\sum_{j=0}^nF(n,j),
		\end{equation}
		which, considering inductive supposition, proves Formula (\ref{onee}).
		
		Formula (\ref{twoo}) can be proven in a similar fashion, the corresponding Wilf--Zilberger pair is given by
		\begin{eqnarray*}
			G(n,j)=\frac{(-1)^{j+n}(j-1)(j+k)(k+2n)}{2n(n-1)(j-n-1)(k+2j-1)(k^2+n(n-1)+k(2n-1))}\times\\
			\times(1+k^2(n-1)+k(n^2-3n)-3n^2+j(2n^2+2kn-k-1))\binom{j+n+k-2}{2j+k-2}\binom{2j+k-1}{j-1}.
		\end{eqnarray*}
	\end{proof}
	The proof of this lemma concludes the proof of the identity and finishes the proof of the initial theorem. 
\end{proof}

\section{Conclusions}\label{CONCLUSIONS}
In this paper, we considered the lattice path model $\mathcal{L}_U$, which is the auxiliary lattice path model in the presence of long steps. Weighted numbers of paths in this model recreate multiplicities of $U_q(sl_2)$-modules in tensor product decomposition of $T(1)^{\otimes N}$, where $U_q(sl_2)$ is a quantum deformation of the universal enveloping algebra of $sl_2$ with divided powers and $q$ is a root of unity. Explicit formulas for multiplicities of all tilting modules in tensor product decomposition were derived by purely combinatorial means in the main theorem of this paper Theorem \ref{Th1}.

We found that the auxiliary lattice model defined in \cite{PS} is of great use for counting multiplicities of modules of differently defined quantum deformations of $U(sl_2)$ at $q$ root of unity. For instance, in \cite{LPRS} we applied periodicity conditions to the auxiliary lattice path model to obtain a folded Bratteli diagram, weighted numbers of paths for which recreate multiplicities of modules in tensor product decomposition of $T(1)^{\otimes N}$, where $T(1)$ is a fundamental module of the small quantum group $u_q(sl_2)$. In this paper, we modified the auxiliary lattice path model by applying long steps to obtain multiplicities for the case of $U_q(sl_2)$ with divided powers in a similar fashion.\par
The model defined in \cite{PS} required analysis of combinatorial properties of filters, which we heavily relied on. In this paper, we introduced long steps and explored their combinatorial properties. In order to derive formulas for weighted numbers of paths in this setting, we also defined boundary points and congruence of regions in lattice path models. The philosophy of congruence is fairly easy to understand. Two different lattice path models can be locally indistinguishable due to coinciding recursions for weighted numbers of paths in these regions. Weighted numbers of paths at boundary points of the considered region uniquely define weighted numbers of paths for the rest of the region by recursion. So, instead of proving identities for the whole region, it is sufficient to prove such only for boundary points of the region. At boundary points, an identity can be represented as a linear combination of weighted numbers of paths from different lattice path models and one needs to take into account boundary points of congruent regions with respect to all these models.

We found that besides applying periodicity conditions to the auxiliary lattice path model, one can take $U_q(sl_2)$, consider its restriction to $u_q^-U_q^0u_q^+$, where $u_q^\pm$ are subalgebras of the small quantum group $u_q(sl_2)$, generated by $F$ and $E$, respectively, and $U^0_q$ is a subalgebra of $U_q(sl_2)$, generated by $K^{\pm 1}$ and $\qbin{K ; c}{t}$, for $t\geq 0$, $c\in\mathbb{Z}$. Then, we can restrict $u_q^-U_q^0u_q^+$ to $u_q(sl_2)$. This procedure defines another modification of the auxiliary lattice path model and, remarkably, gives the same result as with periodicity conditions. The lattice path model corresponding to $u_q^-U_q^0u_q^+$ will be considered in the upcoming paper.

Considering other possible directions for further research, the following questions remain open:
\begin{itemize}
	\item Multiplicity formulas for decomposition of tensor powers of fundamental representations of $U_q(sl_n)$ at roots of unity remain out of reach and can be a source of inspiration for other interesting combinatorial constructions. We expect that for $U_q(sl_n)$ derivation of such formulas will rely on similar combinatorial ideas. It is worth mentioning that obtaining such formulas explicitly is of interest for asymptotic representation theory, mainly, for constructing Plancherel measure and possibly obtaining its limit shape in different regimes, including regime when $n\to\infty$ (\cite{BOO,PR,LPRS}).
	\item In \cite{STWZ}, similar lattice path models emerge when studying the Grothendieck ring of the category of tilting modules for $U_q(sl_2)$ in the mixed case: when $q$ is an odd root of unity and the ground field is $\overline{\mathbb{F}_p}$. One can expand the combinatorial analysis presented in this paper to a mixed case.
\end{itemize}


\begin{thebibliography}{6}  
	\bibitem[1]{L95}  Littelmann, P.{ Paths and root operators in representation theory}. \emph{Ann. Math.} \textbf{1995}, \emph{142},  499–525.
	\bibitem[2]{M98}  Macdonald, I. {\it Symmetric Functions and Hall Polynomials}, 2nd ed.; Oxford mathematical Monographs; Clarendon Press: {Oxford, UK}, 1998.
	\bibitem[3]{K90}  Kashivara, M. { Crystalizing the $q$-analogue of universal enveloping algebras}. \emph{Commun. Math. Phys.} \textbf{1990}, \emph{133}, 249–260.
	\bibitem[4]{GM93}  Grabiner, D.;   Magyar, P. { Random walks in Weyl chambers and the decomposition of tensor powers}.  \emph{J. Algebr. Combin.} \textbf{1993}, \emph{2}, 239–260.
	\bibitem[5]{G02}  Grabiner, D. { Random walk in an alcove of an affine Weyl group, and non-colliding random walks on an interval}. \emph{J. Comb. Theory Ser. A} \textbf{2002}, \emph{97}, 285–306.
	\bibitem[6]{TZ04}  Tate, T.;  Zelditch, S. { Lattice path combinatorics and asymptotics of multiplicities of weights in tensor powers}. \emph{J. Funct. Anal.} \textbf{2004}, \emph{217}, 402–447.
	\bibitem[7]{PR}  Postnova, O.;   Reshetikhin, N. { On multiplicities of ireducibles in large tensor product of representations of simple Lie algebras}. \emph{arXiv} \textbf{2020}, arXiv:1812.11236.
	\bibitem[8]{B72}  Bratteli, O. { Inductive limits of finite dimensional C*-algebras}. \emph{Trans. Am. Math. Soc.} \textbf{1972}, \emph{171}, 195--234.
	\bibitem[9]{LPRS}  Lachowska, A.;  Postnova, O;  Reshetikhin, N.;   Solovyev, D. { Tensor Powers of Vector Representation of $U_q(\mathfrak{sl}_2)$ at Even Roots of Unity}, TBA. 
	\bibitem[10]{QG}  Faddeev, L.;  Reshetikhin, N.;  Takhtajan, L. { Quantization of Lie groups and Lie algebras}. \emph{Algebr. Anal.} \textbf{1988}, {\emph{1:1}}, {129}--139. 
	\bibitem[11]{CP}  Chari, V.;    Pressley, A. {\it A Guide to Quantum Groups}; Cambridge University Press: {Cambridge, UK,} 1995.
	\bibitem[12]{A92}  Andersen, H. { Tensor products of quantized tilting modules}. \emph{Comm. Math. Phys.} \textbf{1992}, \emph{149}, 149--159.
	\bibitem[13]{PS} Postnova, O.; Solovyev, D. {Counting filter restricted paths in $\mathbb{Z}^2$ lattice}. \emph{arXiv} \textbf{2021}, arXiv:2107.09774.
	\bibitem[14]{SQG}  Lusztig, G. { Finite-dimensional Hopf algebras arising from quantized universal enveloping algebra}. \emph{J. Am. Math. Soc.} \textbf{1990}, \emph{3}, 257–296.
	\bibitem[15]{S}  {Solovyev, D.}{ Towards counting paths in lattice path models with filter restrictions and long steps}. \emph{Zap. Nauch, Sem. POMI} \textbf{2021}, \emph{509}, 201--215.
	\bibitem[16]{Kratt}  Krattenthaler, C. Lattice path combinatorics chapter.  In {\it Handbook of Enumerative Combinatorics};  Bona, M., Ed.;  Chapman and Hall:  {London, UK,} 2015.
	\bibitem[17]{TPR} Andersen, H.H.;    Paradowski, J. { Fusion categories arising from semisimpleLie algebras}. \emph{Comm. Math. Phys.} \textbf{1995}, \emph{169}, 563--588.
	\bibitem[18]{Z91}  Zeilberger,  D. { The Method of Creative Telescoping}.  \emph{J. Symb. Comput.} \textbf{1991}, \emph{11}, 195--204.
	\bibitem[19]{WZ90}  Wilf, H.;    Zeilberger, D. { Rational Functions Certify Combinatorial Identities}. \emph{J. Am. Math. Soc.} \textbf{1990}, \emph{3}, 147--158.
	\bibitem[20]{PS94}  Paule, P.;    Schorn, M. { A Mathematica Version of Zeilberger's Algorithm for Proving Binomial Coefficient Identities}.  \emph{J. Symb. Comput.} \textbf{{1994}}, \emph{{11}}, {673--698}.  
	\bibitem[21]{BOO} Borodin, A.; Okounkov, A.; Olshanski, G. { Asymptotics of Plancherel measures for symmetric groups}.  \emph{J. Am. Math. Soc.} \textbf{2000},\emph{13}, 481--515.
	\bibitem[22]{STWZ}Sutton, L.; Tubbenhauer, D.; Wedrich, P.; Zhu, J. { SL2 tilting modules in the mixed case}. \emph{arXiv} \textbf{2021}, arXiv:2105.07724.
	\bibitem[23]{RT}  Reshetikhin, N.;    Turaev, V. { Invariants of 3-manifolds via link polynomials and quantum groups}. \emph{Invent. Math.} \textbf{1991}, \emph{103}, 547--597.
	\bibitem[24]{M92} Martin, P.P. { On Schur-Weyl duality, $A_n$ Hecke algebras and quantum sl(N) on $\otimes^{n+1}\mathbb{C}^N$}. \emph{Int. J. Mod. Phys. A} \textbf{1992}, \emph{7}, 645--673.
	
\end{thebibliography}
\end{document}